\newtheorem{thm}{Theorem}
\newtheorem{lem}[thm]{Lemma}
\newtheorem{cor}[thm]{Corollary}
\newtheorem{prop}[thm]{Proposition}
\theoremstyle{definition}
\newtheorem{rem}[thm]{Remark}
\newcommand{\LieG}{{\mathsf G}}
\newcommand{\LieH}{{\mathsf H}}
\newcommand{\LieE}{{\mathsf E}}
\newcommand{\LieF}{{\mathsf F}}
\newcommand{\LieO}{{\mathsf{O}}}
\newcommand{\LieSO}{{\mathsf{SO}}}
\newcommand{\LieSp}{{\mathsf{Sp}}}
\newcommand{\LieSU}{{\mathsf{SU}}}
\newcommand{\LieU}{{\mathsf{U}}}
\newcommand{\Iso}{{\mathsf{Iso}}}
\newcommand{\Trans}{{\mathsf{Trans}}}
\newcommand{\lieG}{{\mathfrak{g}}}
\newcommand{\lieH}{{\mathfrak{h}}}
\def\N{{\mathbb N}}
\def\Z{{\mathbb Z}}
\def\R{{\mathbb R}}
\def\C{{\mathbb C}}
\def\Q{{\mathbb Q}}
\title[$\Gamma$-structures and symmetric spaces]{$\Gamma$-structures and symmetric spaces}
\author[B.\ Hanke \& P.\ Quast]{Bernhard Hanke and Peter Quast}
\address{Institut f\"{u}r Mathematik, Universit\"{a}t Augsburg, 
86135 Augsburg, Germany}
\email{hanke@math.uni-augsburg.de,\; peter.quast@math.uni-augsburg.de}
\subjclass[2010]{Primary 57T15. Secondary 53C35,  55S45, 57T25}
\keywords{$\Gamma$-structures, Postnikov decompositions, rational cohomology, symmetric spaces}
\begin{document}

\begin{abstract}
$\Gamma$-structures are  weak forms of multiplications on  closed 
oriented manifolds. As shown by \textsc{Hopf} the rational cohomology algebras of 
manifolds admitting $\Gamma$-structures are free over odd degree generators. 
We prove that  this condition is also sufficient for the existence of 
$\Gamma$-structures on manifolds which are nilpotent in the sense of homotopy 
theory. This  includes homogeneous spaces with connected isotropy groups. 

Passing to a more geometric perspective we  show that on compact oriented Riemannian symmetric spaces with 
connected isotropy groups and  
 free  rational cohomology algebras  the canonical products  
 given  by geodesic symmetries define  $\Gamma$-structures. 
This extends work of \textsc{Albers, Frauenfelder} and 
\textsc{Solomon} on  $\Gamma$-structures on Lagrangian Grassmannians.

\end{abstract}

\maketitle

\section*{Introduction} 
In his seminal papers \cite{Ho1, Ho2} on the (co)homological structure 
of Lie groups \textsc{Hopf}  introduced the notion 
of {\em $\Gamma$-manifolds}. By definition these are closed connected oriented manifolds 
$M$ together with continuous maps 
\[
   \psi : M \times M \to M 
\]
so that the mapping degrees of the two restrictions $\psi_{x}=\psi(x,-)$ and 
$\psi^{y}=\psi(-,y)$ 
are non-zero for some (and hence for all) $x, y \in M$. In some sense such maps  $\psi$, which we 
call  {\em $\Gamma$-structures}, capture the simplest non-trivial homological information of  
Lie group multiplications. 
\textsc{Hopf} proved that the rational cohomology rings of $\Gamma$-manifolds are of a surprisingly restricted 
type: Because they admit compatible  comultiplications (and are hence  {\em Hopf algebras} in modern terminology) 
they are free graded commutative $\Q$-algebras, whose generators must be in odd 
degrees as $M$ is finite dimensional, 
 see \cite{Ho1, Ho2} and \cite[Chap.\ VI, \S 2.A]{D} for further information.
As
carried out by \textsc{Borel} in \cite[Chapitre II, $\S$ 7]{Bo}, further divisibility restrictions on the mapping 
degrees of $\psi_{x}$ 
and $\psi^{y}$ have similar  implications for the cohomology rings over finite fields. 
 \par

A closely related and much better known structure is that of an  {\em H-space}, with both restrictions 
$\psi_{x}$ and $\psi^{y}$ being homotopic to the identity. 
While every compact connected oriented manifold that is an 
H-space is obviously a $\Gamma$-manifold, the converse fails: All odd dimensional 
unit spheres are
 $\Gamma$-manifolds (see \cite{Ho2}), 
 but \textsc{Adams} celebrated 
 `Hopf invariant one theorem' in \cite{Ad}
 says that only spheres of dimension $1$, $3$ or $7$ admit
 H-space structures.
  
 In the first part of our paper, Section \ref{SEC:Homotopy}, we shall make some general remarks on the 
 existence of $\Gamma$-structures on manifolds satisfying the above cohomological condition. 
 As the only non-trivial requirement 
 for a $\Gamma$-structure $\psi : M \times M \to M$ 
 is the non-vanishing of the mapping degrees of $\psi_{x}$ and $\psi^{y}$,  a construction of such structures 
 by obstruction theory requires the separation of rational and torsion information in the homotopy 
 type of $M$. This is the underlying idea of {\em rational homotopy theory}, which works 
 best for spaces whose Postnikov decompositions consist of principal fibrations and can hence 
 be described by accessible cohomological invariants. Examples are   {\em simple} spaces, 
 whose fundamental groups are abelian and  act trivially on higher homotopy groups, and, more
 generally, {\em nilpotent} spaces, whose  fundamental groups are nilpotent and 
   act nilpotently on higher homotopy 
groups. 

In Section \ref{SEC:Homotopy} we will prove the following 
converse of Hopf's result. From now on the notion {\em free algebra} stands for {\em free graded commutative 
algebra over the rationals}. 

\begin{thm} \label{existence}  Let $M$ be a closed connected oriented manifold which is 
nilpotent as a topological space. If  $H^*(M; \Q)$ is a free algebra -
necessarily over odd degree generators - then $M$ admits a $\Gamma$-structure. 
\end{thm}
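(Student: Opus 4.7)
The plan is to transfer a $\Gamma$-structure from a product $K$ of odd-dimensional spheres to $M$ by means of rational homotopy equivalence. Write $H^*(M;\Q) = \Lambda(x_1, \ldots, x_n)$ with $|x_i| = d_i$ odd. Since $M$ is closed and oriented, Poincar\'{e} duality forces $\dim M = \sum_i d_i$ and the monomial $x_1 \cdots x_n$ generates the top cohomology. Let $K := S^{d_1} \times \cdots \times S^{d_n}$, so $H^*(K;\Q) \cong H^*(M;\Q)$ as graded algebras. Every odd-dimensional sphere is a $\Gamma$-manifold (a classical result of Hopf), and the product of $\Gamma$-manifolds is again one by multiplying factorwise and observing that degrees multiply; hence $K$ carries a $\Gamma$-structure $\mu : K \times K \to K$.

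The heart of the proof is to produce maps of non-zero mapping degree $f : M \to K$ and $g : K \to M$. For $f$: pick $N_i \in \N$ such that $N_i x_i$ is the image of some class in $H^{d_i}(M;\Z)$, and let $\tilde f_i : M \to K(\Z, d_i)$ represent it. Since $d_i$ is odd, the canonical map $S^{d_i} \to K(\Z, d_i)$ is a rational equivalence, so the higher $k$-invariants in the Postnikov tower of $S^{d_i}$ above $K(\Z, d_i)$ are torsion. As $M$ is a finite complex, the resulting obstruction groups are finite and can be cleared by replacing $\tilde f_i$ by a suitable integer multiple inside the abelian group $H^{d_i}(M;\Z) = [M, K(\Z, d_i)]$. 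This produces $f_i : M \to S^{d_i}$ with $f_i^* \omega_i$ a non-zero integer multiple of $x_i$, where $\omega_i \in H^{d_i}(S^{d_i};\Z)$ is a generator. Then $f := (f_1, \ldots, f_n) : M \to K$ pulls the top class $\omega_1 \cdots \omega_n$ back to a non-zero multiple of $x_1 \cdots x_n$, so $\deg f \neq 0$.

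For $g$: the hypothesis on $H^*(M;\Q)$ together with the nilpotence of $M$ identifies the minimal Sullivan model of $M$ as $(\Lambda(x_1, \ldots, x_n), 0)$, so the rationalization $M_\Q$ is weakly equivalent to $\prod_i K(\Q, d_i) \simeq K_\Q$. The composition $K \to K_\Q \simeq M_\Q$ gives a map $\phi : K \to M_\Q$ of non-zero rational degree. Because $K$ is a finite complex and $M$ is nilpotent of finite $\Q$-type, the fibre of the rationalization $M \to M_\Q$ has only torsion homotopy groups, so the obstructions to lifting $\phi$ through the Postnikov tower of $M$ live in torsion cohomology groups of $K$. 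Multiplying $\phi$ by a sufficiently divisible integer $N$ in the $H$-space structure of $M_\Q$ kills these obstructions and produces a lift $g : K \to M$. Since rationalization and multiplication by $N$ both preserve non-vanishing on top cohomology, $\deg g \neq 0$.

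Finally, define $\psi := g \circ \mu \circ (f \times f) : M \times M \to M$. For any $p \in M$ the restriction $\psi_p = g \circ \mu_{f(p)} \circ f$ is a composition of three maps of non-zero degree and hence has non-zero degree; likewise for $\psi^q$. Therefore $\psi$ is a $\Gamma$-structure on $M$. The principal technical obstacle is the construction of $g$, that is, the integral lift of the rational map $\phi$: this is the step where the nilpotence hypothesis enters essentially, since it makes the Postnikov tower of $M$ refinable into principal fibrations on which the comparison between $\Z$- and $\Q$-coefficient obstruction theory can actually be carried out.
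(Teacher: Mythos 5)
Your architecture is genuinely different from the paper's: rather than building a multiplication directly on a principally refined Postnikov tower of $M$ (Propositions \ref{multi}, \ref{exist_gamma}, \ref{exist_nilpotent}), you dominate $M$ by, and let it dominate, a product of odd spheres $K$, via maps $f:M\to K$ and $g:K\to M$ of non-zero degree, and transport the obvious $\Gamma$-structure of $K$. The reduction itself is correct: the dimension count via Poincar\'e duality, the multiplicativity of degrees, and the final composition $\psi=g\circ\mu\circ(f\times f)$ are all fine. The gap sits exactly in the two lifting steps. You assert that the obstructions to compressing $N\tilde f_i:M\to K(\Z,d_i)$ through the Postnikov tower of $S^{d_i}$, respectively to lifting $\mu_N\circ\phi:K\to M_\Q$ along $M\to M_\Q$, are ``cleared by multiplying by a suitable integer''. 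This is automatic only for the primary obstruction, which is natural in the map; the higher obstructions depend on the previously chosen lifts and are not linear in the map, so composing with multiplication by $N$ does not simply multiply them by $N$. To make this work one needs ``multiplication by $N$'' self-maps of the \emph{entire} relevant tower, compatible with the fibrations, together with the computation that such maps annihilate the finite- (resp.\ torsion-) coefficient cohomology carrying the $k$-invariants. That is precisely the content of Lemma \ref{computation} and Proposition \ref{multi} in the paper, or of Sullivan's Theorem 12.2 --- which, as the Remark in Section \ref{SEC:Homotopy} explains, has a complete published proof only in the simply connected case, whereas your $M$ is merely nilpotent (and degree-one generators force the non-simply-connected situation).

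Of the two steps, the construction of $f$ can in fact be outsourced: for $n$ odd and a finite complex $X$, the fact that a non-zero multiple of every class in $H^{n}(X;\Z)$ is pulled back from $S^{n}$ is a classical theorem of Serre, so citing it closes that half. The construction of $g$ is the real issue: identifying $M_\Q\simeq K_\Q$ already requires the nilpotent (Bousfield--Gugenheim) version of Sullivan theory, and there is no equally standard off-the-shelf statement guaranteeing that some multiple of $\phi\in[K,M_\Q]$ lifts to $[K,M]$; proving it amounts to the same tower-compatible multiplication-by-$N$ argument as above, now over the principal refinement of the tower of $M$ (where the coefficient groups are torsion of unbounded exponent, so $N$ must be chosen stage by stage). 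In short, the plan is sound and can be completed, but only after supplying Proposition \ref{multi}-type self-maps; with those in hand your route and the paper's are of comparable length, the paper's having the advantage of never leaving the Postnikov tower of $M$ and needing no comparison space $K$ at all.
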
 

This is in sharp contrast to the existence of H-space structures, which is a much more restrictive property. 

\begin{cor} \label{homogenous} 
Let $M = \LieG/\LieH$ be a compact connected homogeneous space where  $\LieG$ is a Lie group and $\LieH < \LieG$ is a 
closed connected subgroup. 
If $H^*(M;\Q)$ is a free algebra, then $M$ admits a $\Gamma$-structure. 
\end{cor}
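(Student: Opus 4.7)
The plan is to reduce Corollary \ref{homogenous} to Theorem \ref{existence} by checking that $M = G/H$ is a closed connected oriented manifold which is nilpotent as a topological space.

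First, I would reduce to the case that $G$ itself is connected. Since $M$ is connected, the orbit of $eH$ under the identity component $G_0$ is open and closed, hence equals $M$; and $G_0 \cap H$, being an open subgroup of the connected group $H$, equals $H$. Thus $M \cong G_0/H$, so we may assume $G$ is connected. That $M$ is a closed connected manifold is given, and orientability follows because the connectedness of $H$ forces the isotropy representation $H \to \LieGL(T_{eH}M)$ to land in the identity component, which identifies the tangent bundle as associated to a $\LieGL^+$-reduction.

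The main step is to prove that $M$ is actually \emph{simple} (a strengthening of nilpotent). The tail
\[
\pi_1(G) \to \pi_1(M) \to \pi_0(H) = 0
\]
of the long exact sequence of the fibration $H \to G \to M$ exhibits $\pi_1(M)$ as a quotient of $\pi_1(G)$, and since the fundamental group of any topological group is abelian, so is $\pi_1(M)$. To show that $\pi_1(M)$ acts trivially on $\pi_n(M)$ for $n \geq 2$, I would exploit the $G$-action on $M$. Given a loop $\gamma$ in $M$ based at $eH$, lift it via $G \to M$ to a path $\tilde\gamma$ in $G$ with $\tilde\gamma(0) = e$ and $\tilde\gamma(1) = h \in H$. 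For any based map $\alpha \colon (S^n, *) \to (M, eH)$, the homotopy $F(t, x) = L_{\tilde\gamma(t)}(\alpha(x))$ is a free homotopy of $\alpha$ whose basepoint trace is $\gamma$, so it computes $[\gamma] \cdot [\alpha] = [L_h \circ \alpha]$. Since $H$ is connected, any path $h(s)$ in $H$ from $e$ to $h$ yields a \emph{based} homotopy $s \mapsto L_{h(s)}$ from $\Id_M$ to $L_h$ (based because $L_{h(s)}(eH) = h(s)H = eH$), hence $[L_h \circ \alpha] = [\alpha]$ and the action is trivial.

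Once simplicity (so in particular nilpotence) of $M$ is established, Theorem \ref{existence} immediately supplies the $\Gamma$-structure. The key technical point -- and the only mild obstacle -- is the proof that $M$ is simple, which hinges on the connectedness of $H$ being used twice: once to kill $\pi_0(H)$ in the long exact sequence and once to produce the based homotopy $L_h \simeq \Id_M$. The reduction to connected $G$ and the orientability check are otherwise routine.
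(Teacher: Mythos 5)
Your proposal is correct and takes essentially the same route as the paper: the paper proves (Lemma \ref{simple_space}) that $G/H$ with connected $H$ is a simple space, using the exact sequence of the fibration $H \to G \to G/H$ for abelianness of $\pi_1$ and a left-translation homotopy built from a lift of the loop to $G$ for triviality of the $\pi_1$-action, and then applies Theorem \ref{existence}. Your variant of lifting the loop to a path in $G$ ending at some $h\in H$ and contracting $L_h$ through $H$ is equivalent to the paper's lift of the loop class to $\pi_1(G)$, and your explicit reduction to connected $G$ and the orientability check are points the paper leaves implicit.
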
 

Theorem \ref{existence} also implies (see Corollary \ref{free_virtab}) that nilpotent manifolds with 
free rational cohomology algebras have  virtually abelian fundamental groups.

Our abstract 
existence result motivates the search for explicit geometric constructions of $\Gamma$-structures. 
Already \textsc{Hopf} \cite{Ho2} used geodesic symmetries on odd dimensional spheres 
to write down $\Gamma$-structures. 
It is therefore natural to consider a Riemannian symmetric space $P$
endowed with its  \emph{canonical product}
(see e.g.\ \cite[Chap.\ II, \S 1]{L-I})
 \begin{equation}
 \label{EQ: Product sym space}
\Theta:P\times P\to P,\quad (x,y)\mapsto s_x(y),
 \end{equation}
 where $s_x$ denotes the \emph{geodesic symmetry} of $P$ at the point $x\in P,$ that is
the involutive isometry of $P$ that fixes $x$ and that reverses 
 the direction of all geodesics emanating from $x.$
 In Section \ref{SEC: can prod sym space} we prove:

\begin{thm}
 \label{THM: classification} 
 Let $P$ be a compact symmetric space with transvection group
 $\LieG,$ that is $\LieG$ is the connected closed subgroup of the
isometry group of $P$  generated by products of two geodesic symmetries of $P.$ 
 Assume that the isotropy subgroup 
 $\LieH < \LieG$ of a base point in $P$ within $\LieG$
 is connected. 
Then the following assertions 
are equivalent:
\begin{itemize}
\item $H^*(P;\Q)$ is a free algebra.
\item The canonical product $\Theta$ of $P$ is a $\Gamma$-structure.
\end{itemize}
\end{thm}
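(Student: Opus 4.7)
The forward implication---that $\Theta$ being a $\Gamma$-structure forces $H^*(P;\Q)$ to be a free algebra---is immediate from Hopf's theorem recalled in the introduction. For the converse, assume $H^*(P;\Q)$ is free. The map $\Theta_p = s_p$ is the geodesic symmetry at $p$, an isometry of the compact oriented Riemannian manifold $P$, and therefore has mapping degree $\pm 1$. The whole weight of the argument thus lies in showing that the second partial map $\Theta^q \colon x \mapsto s_x(q)$ also has non-zero mapping degree.

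My plan for $\Theta^q$ is to pass to the Cartan embedding $\iota \colon P \hookrightarrow G$, $gK \mapsto g\sigma(g)^{-1}$, where $\sigma$ is the involution of $G$ whose fixed-point group is $K$. Under this embedding the canonical product takes the well-known Loos form $(x,y) \mapsto x y^{-1} x$, so when $q$ equals the base point of $P$ (mapped to $e \in G$) the restriction $\Theta^q$ becomes the squaring map $\mathrm{sq}\colon x \mapsto x^2$, which preserves $\iota(P)$. Since $P$ is connected, all maps $\Theta^q$ are mutually homotopic, so $\deg \Theta^q = \deg \mathrm{sq}$, and the problem reduces to showing $\deg \mathrm{sq} \neq 0$ under the hypothesis that $H^*(P;\Q)$ is free and $K$ is connected.

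For this last step I would exploit that ``$H^*(P;\Q)$ free'' together with ``$K$ connected'' forces $P$ to be of rank-split type, namely $\rk G = \rk K + \rk P$; this should be extractable either from the classification of compact irreducible symmetric spaces or, more conceptually, from the structural result referenced in the introduction as Theorem \ref{THM: Rank split}. A maximal flat $\mathbb F \subset P$ is then a torus of dimension $r = \rk P$, on which the squaring map has degree $2^r$; using that $K$ acts transitively on maximal flats and that every point of $P$ lies in some such flat, this local computation can be transferred to $P$ and yields $\deg \mathrm{sq} = \pm 2^r \neq 0$. An alternative cohomological route is to show that $\mathrm{sq}^*$ acts as multiplication by $2$ on each free generator $y_i \in H^*(P;\Q)$ modulo decomposables, whence it acts as $2^r$ on the top class $y_1 \cdots y_r$. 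The main obstacle I anticipate is precisely this final step: either the identification of the primitive classes with a transgressive subspace compatible with $\mathrm{sq}$, or the case-by-case verification via the classification. Once the rank-split structure is available, the computation reduces to the well-understood behaviour of squaring on a maximal torus and is essentially elementary.
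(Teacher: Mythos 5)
Your overall skeleton (forward direction by Hopf, $\deg \Theta_p = \pm 1$, and the identification of $\Theta^q$ with the squaring map with respect to the origin $q$, whose degree must be shown to be non-zero) matches the paper. But the step you lean on for the converse contains a genuine error: it is \emph{not} true that ``$H^*(P;\Q)$ free together with $K$ connected forces $P$ to be of splitting rank''. The spaces $\LieSU_{2m+1}/\LieSO_{2m+1}$ (equivalently, up to a finite cover by $S^1\times \LieSU_{2m+1}/\LieSO_{2m+1}$, the Lagrangian Grassmannians $\LieU_{2m+1}/\LieO_{2m+1}$) have free rational cohomology and connected isotropy, yet $\Rank(\LieG)=2m$ while $\Rank(\LieK)+\Rank(P)=m+2m$, so they are not of splitting rank. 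This is precisely why the paper's proof, after reducing (via the covering and product lemmas) to irreducible simply connected spaces and discarding inner spaces, invokes the Goertsches/Murakami/Burstall--Rawnsley characterization to conclude that the only possibilities are splitting-rank spaces \emph{or} $\LieSU_{2m+1}/\LieSO_{2m+1}$, and then treats the latter family separately using the Albers--Frauenfelder--Solomon computation, which gives $\deg\theta = 2^{m+1}$ for $\LieU_{2m+1}/\LieO_{2m+1}$ --- not $2^{\Rank(P)}$. Your plan has no mechanism to handle this family.

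Relatedly, the ``transfer from the maximal flat'' is not valid as stated: the degree is a \emph{signed} count over the $2^r$ preimage points in the flat, with signs governed by the parities of the root multiplicities (this is the content of the paper's mapping-degree theorem, computed via Jacobi fields). Only when all multiplicities are even (the splitting-rank case) are all signs $+1$, giving $2^r$; with odd multiplicities there is cancellation (for an even-dimensional sphere the squaring map has degree $0$), and for the Lagrangian Grassmannians a genuinely combinatorial argument is needed to see the signed sum is non-zero. Your alternative cohomological route also conflates $\Rank(P)$ with the number of exterior generators; these agree in the splitting-rank case but not for $\LieSU_{2m+1}/\LieSO_{2m+1}$, where there are $m$ generators and the degree is $2^m$, while $\Rank(P)=2m$. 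Finally, the reduction to the irreducible simply connected case (finite cover splitting off a torus, multiplicativity of the degree under products and coverings, and exclusion of inner spaces because their rational cohomology is concentrated in even degrees) is needed before any classification or root-system argument can be applied, and is missing from your outline, though it is the more routine part.
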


We use the assumption that $\LieH$ is connected in Lemma \ref{LEM: cohomologies iso}, but we do not 
know whether there is any example 
of an oriented compact symmetric space whose isotropy groups within its transvection group are not connected and whose 
rational cohomology is a free algebra. Notice however that Lemma 17 and therefore Theorem 3
still hold true, if one assumes that our compact symmetric space P can be written as a quotient of
a connected Lie group by a closed connected subgroup, or, in fact,  that it is just a nilpotent topological space.

Theorem  \ref{THM: classification}  covers Lagrangian Grassmannians of odd rank, which amounts to the main result in \cite{AFS} 
(see Remark \ref{REM: Lagrangian Grassmannian}).

We observe that whenever  the canonical product $\Theta$ defines  a 
$\Gamma$-structure on $P$, then the mapping degrees of 
$\Theta_{x}$ and $\Theta^{y}$ are (up to sign)  powers of $2$ so that, in view of  \cite{Bo}, 
our results imply that additional restrictions 
on the cohomology algebras over finite fields of characteristic different from 
two are implied  by properties of the rational cohomology algebras. This generalizes
\cite[Cor.\ 3.3 \& 4.10]{Ar2}.\par

Unfortunately Theorem \ref{THM: classification} does not cover all the manifolds from Corollary \ref{homogenous}. For 
example 
it remains an open problem to provide a geometric construction of $\Gamma$-structures on complex 
and quaternionic Stiefel manifolds (see \cite[Thm.\ 3.10, p.\ 119]{MT}), which are not symmetric spaces.

\subsection*{Acknowledgements}
We are grateful to \textsc{Jost-Hinrich Eschenburg, Urs Frau\-en\-felder, Oliver Goer\-tsches, 
Dieter Kot\-schick, Mar\-kus Up\-meier}, and \textsc{Mi\-cha\-el Wie\-meler} 
for valuable remarks 
during the preparation of this manuscript. We also thank the anonymous referee for a
helpful feedback.
The research of the first named author was supported 
by DFG grant HA 3160/6-1.


\section{Postnikov decompositions and $\Gamma$-structures} 
\label{SEC:Homotopy} 

In this section we present  a  homotopy theoretic construction of $\Gamma$-structures.
Recall  that the homotopy type of a  path connected CW complex $X$
can be analysed by means of its {\em Postnikov decomposition}, see, for example,  \cite[p. 410 ff.]{Ha1}: 
Choose a base point in $X$ and, for $n \geq 0$,  let $X_n$ be obtained by killing 
all homotopy groups of $X$ above degree $n$ by attaching cells of dimension at least $n+2$.
Up to homotopy equivalence we can assume that each inclusion $p_{n+1}: X_{n+1} \to X_n$ is a fibration. 
The long exact homotopy sequence shows that the fibre is an Eilenberg--MacLane 
space $K(\pi_{n+1}, n+1)$ where $\pi_{n+1} := \pi_{n+1}(X)$. In particular, $X_1$ is 
 the classifying space $B\pi_1(X)$. 

Recall that $X$  is called {\em simple}, if $\pi_1 = \pi_1(X)$ is 
 abelian and acts trivially on higher homotopy groups. 
For simple $X$ each fibration 
$p_{n+1}: X_{n+1} \to X_n$  is {\em principal}, see \cite[Theorem 4.69]{Ha1},  that is
the pull back of the path-loop fibration 
\[
   K(\pi_{n+1}, n+1) \to PK(\pi_{n+1} , n+2) \to K(\pi_{n+1}, n+2)
 \]
along a map $X_{n} \to K(\pi_{n+1}, n+2)$. By definition this map 
determines    the $n$-th {\em $k$-invariant}  
$k_n \in H^{n+2}(X_{n}; \pi_{n+1})$.
This class is equal to the image of the fundamental class in $H^{n+1}(K(\pi_{n+1} , n+1) ; \pi_{n+1})$ 
under the transgressive differential $d_{n+2}$ in the Leray--Serre spectral sequence for the fibration $p_{n+1}$. Furthermore, the $k$-invariant $k_n$ is equal to zero, if and only if the 
fibration $p_{n+1}: X_{n+1} \to X_n$ is fibre homotopy equivalent to the trivial fibration. 
We denote by $(k_n)_{\Q} \in H^{n+2}(X_{n}; \pi_{n+1} \otimes \Q)$ the image of $k_n$ under 
the coefficient homomorphism $\pi_{n+1} \to \pi_{n+1} \otimes \Q$. 

In the following we collect some well known facts on the cohomology of Eilenberg--MacLane 
spaces. 

\begin{lem} \label{computation} 
Let $C$ be a (finite or infinite) cyclic group, and let $n >0$ be a positive integer. Then
\begin{itemize} 
     \item $H^*(K(C,n) ; \Z)$ is a finitely generated group in each degree. 
    \item for $C = \Z$ the cohomology algebra $H^*(K(\Z,n) ; \Q)$ is free with one generator in degree $n$. 
   \item for $|C| < \infty$ the reduced cohomology $\widetilde{H}^*(K(C,n); \Q)$ is equal to $0$.
  \end{itemize} 
Let $m > 0$ be a positive integer and let 
$\mu_m : C \to C$ be multiplication by $m$. Then
\begin{itemize}
   \item for $C = \Z$  the induced map 
\[
      \mu_m^* : H^*(K(\Z,n) ; \Q) \to H^*(K(\Z,n) ; \Q) 
\]
is given by multiplication with $m^k$ on $H^{kn}(K(\Z,n) ; \Q)$ for $k \geq 0$. 
\item for all $C$ the map 
\[
   \mu_m^* : \widetilde{ H}^* (K(C,n) ; \Z/m ) \to \widetilde{H}^*(K(C,n) ; \Z/m)
\]
is equal to $0$. 
\end{itemize} 
\end{lem}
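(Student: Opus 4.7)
The plan is to prove all five bullets together by induction on $n$, centered on the Serre spectral sequence of the path-loop fibration
\[
  K(C,n)\longrightarrow PK(C,n+1)\simeq *\longrightarrow K(C,n+1).
\]
For the base case $n=1$ the identification $K(\Z,1)\simeq S^1$ gives $H^*(K(\Z,1);\Z)=\Lambda(\iota_1)$ directly, and for finite cyclic $C$ the space $K(C,1)$ is a standard infinite-dimensional lens space whose classical cellular computation yields finite generation in each degree and trivial reduced rational cohomology. In this degree, $H^1(K(C,1);A)\cong\mathrm{Hom}(C,A)$ for any abelian $A$, and $\mu_m^*$ acts by precomposition with multiplication by $m$ on $C$, which gives both the multiplication-by-$m^k$ assertion (with $k=1$) and the mod-$m$ vanishing in degree one.

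For the inductive step of the first three bullets I would use standard transgression arguments in the Serre spectral sequence. The $E_2$-page is a tensor product of two groups finitely generated in each bidegree by the inductive hypothesis, so finite generation of $H^*(K(C,n+1);\Z)$ follows from convergence. For finite $C$, inductively trivial rational fiber cohomology together with contractibility of the total space forces rational triviality of the base. For $C=\Z$, the single positive-degree generator $\iota_n$ of the fiber transgresses to a class $\iota_{n+1}\in H^{n+1}(K(\Z,n+1);\Q)$, and Borel's theorem on transgressive generators identifies $H^*(K(\Z,n+1);\Q)$ as the free graded commutative algebra on $\iota_{n+1}$.

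Bullet four is then a short afterthought: once $H^*(K(\Z,n);\Q)$ is known to be free on a single generator $\iota_n$ of degree $n$, the Hurewicz theorem identifies $\mu_m^*(\iota_n)=m\iota_n$ (since $\mu_m$ is multiplication by $m$ on $\pi_n=\Z$ and hence on $H_n$), and multiplicativity of $\mu_m^*$ upgrades this to $\mu_m^*(\iota_n^k)=m^k\iota_n^k$, which spans $H^{kn}$ in both the polynomial and the exterior case.

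Bullet five is the step I expect to require the most care. I would first decompose $\Z/m\cong\bigoplus_{p^a\|m}\Z/p^a$ and, using the factorization $\mu_m=\mu_{p^a}\circ\mu_{m/p^a}$, reduce to proving $\mu_{p^a}^*=0$ on $\widetilde H^*(K(C,n);\Z/p^a)$ for each prime power $p^a$ dividing $m$. This is then shown by induction on $a$. The base case $a=1$ is Serre's theorem: $H^*(K(C,n);\mathbb F_p)$ is generated as an $\mathbb F_p$-algebra by admissible Steenrod operations applied to the fundamental class $\iota_n$, and since $\mu_p^*(\iota_n)=p\iota_n\equiv 0\pmod p$ while Steenrod operations are natural, $\mu_p^*$ kills every such generator and hence all of $\widetilde H^*(K(C,n);\mathbb F_p)$. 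For the inductive step, given $\alpha\in\widetilde H^k(K(C,n);\Z/p^a)$, its mod-$p$ reduction is annihilated by $\mu_p^*$, so $\mu_p^*(\alpha)$ lies in the image of $p_*:\widetilde H^k(K(C,n);\Z/p^{a-1})\to\widetilde H^k(K(C,n);\Z/p^a)$ coming from the Bockstein sequence of $0\to\Z/p^{a-1}\to\Z/p^a\to\Z/p\to 0$. Naturality of $\mu_{p^{a-1}}^*$ in the coefficients combined with the inductive vanishing on $\Z/p^{a-1}$-coefficients yields $\mu_{p^{a-1}}^*(\mu_p^*(\alpha))=0$, and the relation $\mu_{p^a}=\mu_p\circ\mu_{p^{a-1}}$ gives $\mu_{p^a}^*=\mu_{p^{a-1}}^*\circ\mu_p^*$, which is exactly the desired vanishing. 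The main obstacle is aligning the Bockstein bookkeeping with the induction on $a$, which is routine but tedious.
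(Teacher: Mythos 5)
Your proposal is correct and follows essentially the same route as the paper: the $n=1$ base case via $S^1$ and lens spaces, induction through the Serre spectral sequence of the path--loop fibration for the first three bullets, the Cartan--Serre description of the mod $p$ cohomology as generated by operations applied to the fundamental class for the prime case of the last bullet, and then a Bockstein induction over prime powers together with the primary decomposition of $\Z/m$. The only minor deviation is that you obtain the action of $\mu_m^*$ on $H^{kn}(K(\Z,n);\Q)$ from the Hurewicz theorem plus multiplicativity rather than from naturality of the spectral sequence, which is equally valid.
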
  

\begin{proof} We first prove all but the last statement by induction on $n$.
For $n=1$ we have $K(C,1) = BC$, the classifying space of $C$, so that the assertions are clear 
for $C = \Z$ (recall  $B\Z = S^1$).
For $|C| < \infty$ the classifying space $BC$ is an infinite dimensional lens space, the cohomology 
$\widetilde{H}^*(BC; \Z)$ is equal to $\Z / |C|$ in even degrees and $0$ in odd degrees, and $\mu^*_m : 
H^*(BC ; \Z) \to H^*(BC; \Z)$ is 
given by multiplication with $m^k$ on $H^{2k}(BC ; \Z)$. Together with the universal coefficient 
theorem this  completes the case $n =1$.

For the induction step we recall  that  the cohomology with coefficients in some commutative ring 
$R$ of base and fibre of the path 
loop fibration 
\[
     K(C, n) \to PK(C, n+1) \to K(C,n+1) 
\]
appear on the two coordinate axes of the $E_2$-term of the Leray--Serre spectral sequence and 
that this spectral sequence is natural with respect to homomorphisms $C \to C$. 
The spectral sequence converges to $H^*(PK(C, n+1); R)$, which vanishes in positive degrees, because $PK(C,n+1)$ is contractible. 

By induction this shows that $H^*(K(C, n+1); \Z)$ is 
finitely generated in each degree, that $\widetilde{H}^*(K(C, n+1); \Q) = 0$ for $|C| < \infty$, and  that $H^*(K(\Z,n+1); \Q)$ is a free algebra in one generator of degree $n+1$.  The last implication is based 
on the multiplicative structure of the spectral sequence. 

Now let $m > 0$ and $\mu_m : \Z \to \Z$ be multiplication by $m$. Then the naturality of the spectral sequence shows inductively 
that $\mu_m^* : H^{*}(K(\Z, n+1); \Q) \to H^*(K(\Z, n+1); \Q)$ is multiplication 
by $m^k$ in degree $k(n+1)$. This finishes the induction step. 

It remains to show the last statement of Lemma \ref{computation}: For all cyclic groups $C$ and all $m, n > 0$ the map 
\[
   \mu_m^* : \widetilde{ H}^* (K(C,n) ; \Z/m ) \to \widetilde{H}^*(K(C,n) ; \Z/m)
\]
is equal to $0$.  

First, let us assume 
that $m = p^r$, where $p$ is a prime number and $r > 0$.  If $C$ is finite, then
the inclusion of the (unique) Sylow $p$-subgroup $P \to C$ induces an isomorphism 
\[
   \widetilde{ H}^* (K(C,n) ; \Z/p^r ) \cong \widetilde{H}^*(K(P, n); \Z/p^r) 
\]
by a spectral sequence argument. It is therefore enough to concentrate on the  case $C= \Z/p^{\ell} $, ${\ell}  > 0$, if 
$C$ is finite, and $C = \Z$, if $C$ is infinite.  

We work by induction 
on $r$. Let $r =1 $, hence $m = p$.  It is well known, 
see \cite{Ca, Se}, or \cite[Theorem 6.19]{McCl}, 
that $H^*(K(C ,n); \Z/p)$ is a polynomial algebra with free  generators of the form 
$\mathcal{P}(\iota_n)$ where $\iota_n \in H^n(K(C,n);\Z/p)$ is the fundamental class  and 
$\mathcal{P}$ is some mod $ p$ cohomology operation. Because the map $\mu_p^*$ is 
multiplication by $p$ and hence zero 
on $H^n(K(C,n); \Z/p)$ the assertion for $r= 1$ is 
implied by the naturality of the operations $\mathcal{P}$. 

Next, assuming the assertion for $r-1$, the assertion for $r$ follows by use of the exact Bockstein sequence 
\[
    \cdots \rightarrow \widetilde{H}^*(K(C,n); \Z/p^{r-1}) \rightarrow \widetilde{H}^*(K(C,n); \Z/p^{r}) \to 
    \widetilde{H}^*(K(C,n); \Z/p) \rightarrow \cdots , 
\]
which is associated to the short exact coefficient sequence 
\[ 
    0 \to \Z/p^{r-1} \to \Z/ p^{r} \to \Z/p \to 0 
\]
and which is natural with respect to homomorphisms $C \to C$.  A simple diagram chase together with the 
equality  $\mu_{p^{r}} = \mu_{p^{r-1}} \circ 
\mu_p$ then shows the assertion for $r$.

After having finished the proof 
for $m = p^r$ we will now deal with the general case 
$m = p_1^{r_1} \cdot \ldots \cdot p_k^{r_k}$ with pairwise different primes  $p_i$ and $r_i > 0$. According to the decomposition 
\[
    \Z/m \cong \Z/p_1^{r_1} \times  \cdots \times \Z/p_k^{r_k} 
\]
we obtain a splitting 
\[ 
     \widetilde{H}^*(K(C, n); \Z/m) \cong \widetilde{H}^*(K(C,n); \Z/p_1^{r_1}) \oplus \cdots \oplus 
     \widetilde{H}^*(K(C,n) ; \Z/p_k^{r_k}) 
 \]
 which is natural in $C$. On each summand $ \widetilde{H}^*(K(C,n); \Z/p_i^{r_i})$  the multiplication  $\mu_m: C \to C$ induces the zero map, because $\mu_m$ 
factors through $\mu_{p_i^{r_i}}$.  This implies the 
last assertion of Lemma \ref{computation} for general $m>0$.

 \end{proof} 

Let $X$ be  a connected finite simple CW complex whose rational 
cohomology  algebra is free. It must be finitely generated with all generators in odd degrees, because 
$X$ is assumed to be finite. Serre's finiteness theorem (or a direct
inspection of the Postnikov decomposition in connection with Lemma \ref{computation}) 
implies that the homotopy groups $\pi_*(X)$ 
are finitely generated in each degree. Hence they are finite products of cyclic groups. 

\begin{lem} \label{torsion} For each $n  \geq 0 $ the following holds. 
\begin{itemize} 
    \item $H^*(X_n; \Q)$ is a free algebra with generators in degrees $\leq n$ corresponding
     to  the duals of the generators of $\pi_{i}(X) \otimes \Q$, $i \leq n$.
    \item The canonical map $X \to X_n$ induces an injective map in rational cohomology. 
    \item The rationalized $k$-invariant $(k_n)_{\Q}  \in H^{n+2}(X_{n} ; \pi_{n+1} \otimes \Q)$ vanishes. 
\end{itemize} 
 \end{lem}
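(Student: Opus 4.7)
My plan is to prove (1), (2), (3) simultaneously by induction on $n$, with the rational Leray--Serre spectral sequence of the principal fibration $K(\pi_{n+1}, n+1) \to X_{n+1} \to X_n$ as the central tool. Since the fibration is principal, the local coefficient system is trivial, and by Lemma \ref{computation} the fiber cohomology $H^*(K(\pi_{n+1}, n+1); \Q)$ is the free graded commutative $\Q$-algebra on $\rk(\pi_{n+1})$ generators of degree $n+1$.

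For the base case $n = 1$, I would use simplicity and finiteness of $X$ to write $\pi_1(X) \cong \Z^r \oplus F$ with $F$ finite, whence $X_1 = K(\pi_1(X), 1)$ and Lemma \ref{computation} combined with Künneth identify $H^*(X_1; \Q) \cong \Lambda_{\Q}(x_1, \dots, x_r)$ in degree one, proving (1). The $2$-equivalence $X \to X_1$ is an isomorphism on $H^1(-; \Q)$, and the wedge of $r$ linearly independent degree-one classes in the free graded commutative algebra $H^*(X; \Q)$ is nonzero (a classical fact for exterior algebras), so the exterior subalgebra generated by the images of the $x_i$ injects into $H^*(X; \Q)$, giving (2). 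Assertion (3) at level $1$ reduces to the parity sub-claim discussed below applied with $n+1 = 2$.

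For the inductive step, the inductive hypothesis (3) says that the transgression of the fiber's fundamental class vanishes in the rational spectral sequence; multiplicativity then forces collapse at $E_2$, yielding the algebra isomorphism
\[
H^*(X_{n+1}; \Q) \;\cong\; H^*(X_n; \Q) \otimes H^*(K(\pi_{n+1}, n+1); \Q).
\]
The main obstacle is a parity sub-claim: $\pi_{n+1}(X) \otimes \Q$ vanishes when $n+1$ is even. Otherwise the right-hand factor contributes a polynomial generator $y$ in even degree $n+1$; its pullback $\bar y \in H^{n+1}(X; \Q)$ is nonzero by the $(n+2)$-equivalence $X \to X_{n+1}$, but as an even-degree element of the finite-dimensional exterior algebra $H^*(X; \Q)$ on odd generators it is nilpotent, whereas all powers of $y$ remain nonzero in $H^*(X_{n+1}; \Q)$. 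To avoid a circular use of (2) at level $n+1$, I would resolve this by invoking the standard rational-homotopy-theoretic fact that a nilpotent space of finite type with free rational cohomology is rationally a product of Eilenberg--MacLane spaces (so all rational homotopy is concentrated in the odd degrees of the cohomology generators).

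Granted the parity, (1) at level $n+1$ is immediate from the collapse. For (2), the new degree-$(n+1)$ fiber generators $y_j$ pull back via the $(n+2)$-equivalence to linearly independent classes $\bar y_j \in H^{n+1}(X; \Q)$; combined with the inclusion $H^*(X_n; \Q) \hookrightarrow H^*(X; \Q)$ from inductive hypothesis (2), and the fact that rationally the Postnikov decomposition of $X$ corresponds to the tautological filtration of the minimal Sullivan model $(\Lambda V, 0)$ by the generators $V^{\leq n+1}$, the induced map $\Lambda(V^{\leq n+1}) \hookrightarrow \Lambda(V) \cong H^*(X; \Q)$ is an injection of free exterior algebras. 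For (3), the composition $X \to X_{n+1} \xrightarrow{k_{n+1}} K(\pi_{n+2}, n+3)$ factors through $X_{n+2}$, on which $k_{n+1}$ is null; the pullback of $(k_{n+1})_\Q$ to $X$ therefore vanishes, and the injectivity of (2) at level $n+1$ just established forces $(k_{n+1})_\Q = 0$.
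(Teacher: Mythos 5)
Your induction skeleton (collapse of the rational Leray--Serre spectral sequence once $(k_n)_{\Q}=0$, K\"unneth, and the factorization of $k_{n+1}$ through $X_{n+2}$ to kill its rationalization) is the same as the paper's. The gap lies in how you handle the two genuinely delicate points, the parity sub-claim and the injectivity in the second assertion: at both places you quote that a finite-type nilpotent space with free rational cohomology is rationally a product of Eilenberg--MacLane spaces, and that the Postnikov tower corresponds to the tautological filtration of the minimal model $(\Lambda V,0)$. These statements are true for the simple finite complexes at hand, but they already contain all three bullets of Lemma \ref{torsion} verbatim (vanishing rationalized $k$-invariants, $H^*(X_n;\Q)\cong\Lambda(V^{\leq n})$, and the inclusion $\Lambda(V^{\leq n})\subset\Lambda V$), so at its decisive junctures your argument cites a result that is equivalent to --- in fact stronger than --- the statement to be proved, and the inductive apparatus around it does no real work. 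In the setting of this paper that is not an admissible shortcut: the point of this section (see the Remark following the construction of $\psi$) is to give a self-contained Postnikov-tower argument precisely so as not to lean on the Sullivan-theoretic package in the non-simply-connected nilpotent case.

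The citation is also unnecessary, because the ``parity obstacle'' dissolves under the paper's elementary route. The paper proves only the first assertion by induction and then deduces the other two: since $X\to X_n$ is an $(n+1)$-equivalence, it is an isomorphism on rational cohomology in degrees $\leq n$, hence on indecomposables in those degrees; as $H^*(X_n;\Q)$ is (by the first assertion) free on generators of degree $\leq n$ and $H^*(X;\Q)$ is free on odd-degree generators, the generators of $H^*(X_n;\Q)$ map to classes that extend to a free generating system of $H^*(X;\Q)$, so the map $H^*(X_n;\Q)\to H^*(X;\Q)$ is injective --- and, as a by-product, all generators are forced into odd degrees, i.e.\ the vanishing of the even rational homotopy comes out as a consequence rather than being needed as an input. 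The third assertion then follows because the pullback of $k_n$ to $X_{n+1}$, hence to $X$, is nullhomotopic, and injectivity with coefficients in the $\Q$-vector space $\pi_{n+1}\otimes\Q$ forces $(k_n)_{\Q}=0$; the same argument disposes of your base case $(k_1)_{\Q}=0$ without knowing $\pi_2(X)\otimes\Q=0$. Your spectral-sequence step for the first assertion is fine regardless of parity (Lemma \ref{computation} gives a free fibre algebra in either parity), so if you replace the two Sullivan-model citations by this algebraic argument, your proof closes up and essentially coincides with the paper's.
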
 

 \begin{proof} The first assertion implies the second one, because 
 the canonical map $X \to X_n$ induces an isomorphism in rational cohomology up to degree $n$
 and the cohomology algebra $H^*(X; \Q)$ is free. The second assertion implies the 
third one by the following argument. The rationalized $k$-invariant $(k_n)_{\Q}$
 is the image 
of the fundamental class 
 in $H^{n+1}(K(\pi_{n+1} , n+1) ; \Q)$ under the differential $d_{n+2}$ in the 
spectral sequence for the fibration $X_{n+1} \to X_n$. If this differential 
were non-zero, then  the induced map $H^*(X_{n}; \Q) \to H^*(X_{n+1};\Q)$ 
would not be injective. However then the map $H^*(X_n; \Q) \to H^*(X;\Z)$ would 
not be injective, either, by use  of the factorization $X \to X_{n+1} \to X_n$. This 
contradicts the second assertion. 

 It is hence enough to prove the first assertion by induction on $n$. This assertion is 
 clear for $n=0$, because $X_0$ is homotopy equivalent to a point. 
In the inductive step 
the assumption $(k_n)_{\Q} = 0$ implies 
 \[
    H^*(X_{n+1} ; \Q) \cong H^*(X_{n}; \Q ) \otimes H^*(K(\pi_{n+1}, n+1); \Q)
\]
by Lemma \ref{computation},  the K\"unneth theorem 
applied to the splitting of $\pi_{n+1}$ into cyclic groups, and the multiplicative properties of the Leray--Serre 
spectral sequence. From this the first assertion follows for $n+1$. 
\end{proof} 

\begin{prop} \label{multi} For all  $n  \geq  0$  and $m > 0$ there is a self map $f _{m,n}: X_n \to X_n$ 
with the following properties. 
\begin{itemize} 
  \item The induced map $f_{m,n}^* : H^*(X_n; \Q) \to H^*(X_n;\Q)$ is an isomorphism. 
  \item The induced map $f_{m,n}^* : \widetilde{H}^*(X_n ; \Z/m) \to \widetilde{H}^*(X_n; \Z/m)$ is equal to $0$. 
 \end{itemize} 
 \end{prop}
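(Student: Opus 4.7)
The plan is to prove the proposition by induction on $n$. For the base case $n = 1$, take $f_1 \colon X_1 = B\pi_1(X) \to X_1$ to be the map induced by the endomorphism $\mu_m$ of the finitely generated abelian group $\pi_1(X)$. Writing $\pi_1(X)$ as a product of cyclic factors $\prod_i C_i$, so that $B\pi_1(X) \cong \prod_i BC_i$, and applying Lemma \ref{computation} factor by factor together with the K\"unneth formula verifies both required properties of $f_1$.

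For the inductive step, given $m > 0$, we will produce $f_{n+1}$ as a lift through the principal fibration $p_{n+1} \colon X_{n+1} \to X_n$ of a suitable self map $\tilde{f}_n$. The key observation is that by Lemma \ref{torsion} the classifying class $k_n \in H^{n+2}(X_n; \pi_{n+1})$ is rationally trivial, and since this group is finitely generated (Lemma \ref{computation}), $k_n$ is a torsion class. Let $e > 0$ be an integer annihilating the torsion subgroup of $H^{n+2}(X_n; \pi_{n+1})$, and apply the inductive hypothesis with $m$ replaced by $m' = me$ to obtain $\tilde{f}_n$ whose induced map vanishes on $\widetilde{H}^*(X_n; \Z/m')$. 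A universal coefficient argument, decomposing $\pi_{n+1}$ into its free and finite parts and treating each via the UCT, then shows that $\tilde{f}_n^* k_n$ lies in $m'$ times the torsion subgroup and therefore vanishes. This kills the obstruction to lifting the composite $\tilde{f}_n \circ p_{n+1}$ through $p_{n+1}$ and produces $f_{n+1} \colon X_{n+1} \to X_{n+1}$; among the torsor of such lifts we choose the one whose restriction to a fibre is $\mu_m$.

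The rational isomorphism property of $f_{n+1}^*$ follows from the rational collapse of the Serre spectral sequence of $p_{n+1}$ (Lemma \ref{torsion}) together with the fact that $f_{n+1}^*$ respects the resulting K\"unneth decomposition $H^*(X_{n+1}; \Q) \cong H^*(X_n; \Q) \otimes H^*(K(\pi_{n+1}, n+1); \Q)$ and acts as an isomorphism on each tensor factor, using Lemma \ref{computation} on the fibre factor. For the mod $m$ vanishing, we examine the $\Z/m$ Serre spectral sequence: on the $E_2$-page $H^p(X_n; H^q(K(\pi_{n+1}, n+1); \Z/m))$ the map $f_{n+1}^*$ vanishes in positive total degree, since Lemma \ref{computation} yields vanishing of $\mu_m^*$ on $\widetilde{H}^q(K(\pi_{n+1}, n+1); \Z/m)$ for $q > 0$, while for $q = 0,\, p > 0$ the map $\tilde{f}_n^*$ vanishes on $\widetilde{H}^p(X_n; \Z/m)$ (passage from mod $m'$ to mod $m$ vanishing is a direct UCT consequence since $m \mid m'$). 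The main obstacle will be deducing actual vanishing of $f_{n+1}^*$ on $\widetilde{H}^*(X_{n+1}; \Z/m)$ from this $E_\infty$-level vanishing: \emph{a priori} one only obtains that $f_{n+1}^*$ strictly increases the Serre filtration, hence is nilpotent in each cohomological degree with a nilpotence bound that grows with the degree, while $\widetilde{H}^*(X_{n+1}; \Z/m)$ may well be nontrivial in arbitrarily high degrees. Closing this gap will likely require either a sharper choice of lift within the torsor above $\tilde{f}_n$, a further enlargement of the integer $m'$ used in the inductive hypothesis, or a finer analysis exploiting multiplicative structure of the mod $m$ cohomology ring.
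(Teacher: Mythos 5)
Your induction, and your identification of the two key facts (that $k_n$ is a torsion class and that the inductive hypothesis can be used to kill $\tilde{f}_n^*(k_n)$, hence to trivialize the pulled-back fibration), agree with the paper's strategy. But the obstacle you flag at the end is a genuine gap, not a loose end: if you insist on realizing $f_{n+1}$ as a lift of $\tilde f_n$ through $p_{n+1}$ and control it only via the mod $m$ Serre spectral sequence, the best you can conclude is that $f_{n+1}^*$ strictly raises the Serre filtration. Since $X_{n+1}$ is a Postnikov stage, its reduced mod $m$ cohomology is in general nonzero in unboundedly many degrees, so no single map (nor any fixed iterate) is forced to vanish by a filtration-shift argument, and the proposition demands one map that vanishes in all degrees at once. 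There are also two smaller inaccuracies: among lifts of $\tilde f_n\circ p_{n+1}$ the achievable fibre restrictions are exactly the endomorphisms $h$ of $\pi_{n+1}$ with $h_*(k_n)=0$, so you may take $\mu_{m\kappa}$ (with $\kappa$ the order of $k_n$) but not in general $\mu_m$; and the claimed implication ``vanishing mod $m'$ forces vanishing mod $m$ for $m\mid m'$'' is not a direct UCT consequence (the Bockstein long exact sequence only gives such conclusions after composing maps), though this is fixable, e.g.\ by composing two maps supplied by the inductive hypothesis.

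The paper closes exactly the gap you identified by never arguing on the associated graded. Besides the map $\alpha: X_n\times K(\pi_{n+1},n+1)\to X_{n+1}$ covering $f_n$ (which exists because $f_n^*(k_n)=0$, as in your argument), it constructs a map $\beta: X_{n+1}\to X_n\times K(\pi_{n+1},n+1)$ covering the identity of $X_n$ and restricting on fibres to $K(\mu_\kappa,n+1)$: since $\kappa k_n=0$, the composite $K(\mu_\kappa,n+2)\circ k_n$ is nullhomotopic, so the induced map $X_{n+1}\to PK(\pi_{n+1},n+2)$ compresses into a single fibre $K(\pi_{n+1},n+1)$, and this gives the second component of $\beta$. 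Both $\alpha$ and $\beta$ induce isomorphisms in rational cohomology. One then takes $f_{n+1}:=\alpha\circ f'\circ\beta$, where $f'$ is a self map of the product $X_n\times K(\pi_{n+1},n+1)$ obtained from the inductive hypothesis on the first factor and $\mu_m$ on the second; by the K\"unneth formula $(f')^*$ is literally zero on $\widetilde H^*\bigl(X_n\times K(\pi_{n+1},n+1);\Z/m\bigr)$, so $f_{n+1}^*=\beta^*\circ (f')^*\circ\alpha^*$ vanishes on $\widetilde H^*(X_{n+1};\Z/m)$ with no filtration analysis at all. In short, the missing idea is to factor the self map through the trivialized total space $X_n\times K(\pi_{n+1},n+1)$ via such a $\beta$, rather than to search for a better lift inside $X_{n+1}$ itself.
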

 
 \begin{proof} We apply induction on $n$.  Again the case $n = 0$ is clear. 
 
For the induction step we assume the assertion holds for a fixed $n$ and all  $m > 0$. 
 By Lemma \ref{torsion} 
the $k$-invariant $k_n \in H^{n+2}(X_n ; \pi_{n+1})$ is a 
torsion class.  Let $\tau$ denote the 
order of the torsion subgroup of $H^{n+2}( X_{n}; \Z)$.  Then 
 the restriction of the canonical map 
\[
     H^{n+2}(X_{n}; \Z) \to H^{n+2}(X_{n}; \Z) \otimes \Z/\tau \to H^{n+2}(X_{n} ; \Z/ \tau) 
\]
 to the torsion subgroup of $H^{n+2}( X_{n}; \Z)$ is injective. 
This is clear for the first map, and for the second 
map it follows from the universal coefficient theorem.  
In particular, the self map $f_{\tau, n} : X_n \to X_n$ provided by the 
induction hypothesis induces the zero map 
on the torsion subgroup of $H^{n+2}( X_{n}; \Z)$. 
We now consider a splitting 
\[
    \pi_{n+1} \cong  C_1 \times \cdots \times C_k
\]
into cyclic groups and obtain a  corresponding splitting 
\[
    H^{n+2}(X_n ; \pi_{n+1})  \cong H^{n+2}(X_n; C_1) \oplus \cdots \oplus H^{n+2}(X_n; C_k) 
\]
which is  natural in $X_n$. 
Now, for each $ 1 \leq i \leq k$ there is a self map $f_i : X_n \to X_n$ inducing an isomorphism in rational cohomology 
and the zero map on the torsion 
subgroup of $H^{n+2}(X_n; C_i)$:  If  $C_i = \Z$ we  take $f_{\tau, n} : X_{n} \to X_n$
as explained before, and if 
$C_i = \Z/m$  for some $m$ we take the self map $f_{m,n} : X_n \to X_n$ provided 
by the induction hypothesis. 
Let $f := f_k \circ \cdots \circ f_1 : X_n \to X_n$. By construction we have: 
\begin{itemize} 
\item The map  $f$ induces an isomorphism $H^*(X_n; \Q) \to H^*(X_n; \Q)$. 
\item $f^*(k_n) = 0$.
\end{itemize} 

In the  pull back square of fibrations 
\[
    \xymatrix{  K(\pi_{n+1}, n+1) \ar[d]^{\rm incl.} \ar[r]^{=}& K(\pi_{n+1}, n+1) \ar[d] \\
      f^*(X_{n+1}) \ar[r]^{F}   \ar[d]  &         X_{n+1} \ar[d]^{p_{n+1}} \\ 
                                   X_n      \ar[r]^{f }    &         X_n          }
\]
 the map $F$ induces an isomorphism in rational cohomology, by 
a spectral sequence argument, and because  $f$ induces an isomorphism in rational 
cohomology. Because $f^*(k_n) = 0$  the induced fibration $f^*(X_{n+1}) \to X_n$ is fibre 
homotopy trivial and hence we get a homotopy equivalence 
$X_{n} \times K(\pi_{n+1}, n+1) \simeq f^*(X_{n+1})$. Let
\[
     \alpha_n : X_{n} \times K(\pi_{n+1}, n+1) \simeq f^*(X_{n+1}) \stackrel{F}{\to} X_{n+1} 
\]
be the composition of the resulting maps. The map $\alpha_n$ 
induces an isomorphism in rational cohomology by 
construction. 

Next, let  $\mu := \mu_{|k_n|}  : \pi_{n+1} \to \pi_{n+1}$ be the multiplication by  the order 
of the torsion class $k_n \in H^{n+2}(X_{n} ; \pi_{n+1})$. We wish do define a map $\beta_n$ 
fitting into a commutative diagram 
\[ 
  \xymatrix{   K(\pi_{n+1} , n+1) \ar[r]^{K(\mu, n+1)} \ar[d] & K(\pi_{n+1}, n+1) \ar[d]^{\rm incl.} \\
    X_{n+1}                           \ar[r]^-{\beta_n} \ar[d]^{p_{n+1}} &   X_{n} \times K(\pi_{n+1}, n+1)\ar[d]^{\rm proj.} \\ 
                                          X_n                                                     \ar[r]^{=}    &         X_n           }
 \]
 The only non-trivial task is the construction of the map $X_{n+1} \to K(\pi_{n+1}, n+1)$ appearing in the middle 
 horizontal line. For this we consider the diagram 
\[ 
  \xymatrix{   K(\pi_{n+1} , n+1) \ar[r]^{=} \ar[d] & K(\pi_{n+1}, n+1)  \ar[rr]^-{K(\mu , n+1)} \ar[d] & & K(\pi_{n+1}, n+1) \ar[d] \\
    X_{n+1}                           \ar[r]  \ar[d]  &   PK(\pi_{n+1} , n+2) \ar[rr]^-{PK(\mu  , n+1)} \ar[d] & & PK(\pi_{n+1}, n+2) \ar[d] \\
        X_n                              \ar[r]^-{k_n}    &       K(\pi_{n+1} , n+2)   \ar[rr]^-{K(\mu  , n+2)} &   &    K(\pi_{n+1} , n+2)     }
 \]
By assumption the composition in the lower row is homotopic to a constant map. We use 
the homotopy lifting property to homotop  the composition
$X_{n+1} \to PK(\pi_{n+1}, n+2)$ in the second row  to a map $X_{n+1} \to PK(\pi_{n+1}, n+1)$ which 
factors through the fibre inclusion in the right hand column.

The map $\beta_n$ induces an isomorphism in rational cohomology, again by a spetral 
sequence argument combined with Lemma \ref{computation}.

With the maps $\alpha_n$ and $\beta_n$ in hand we are ready to conclude the induction step.  Let $m > 0$ be arbitrary. 
Using the induction hypothesis, Lemma \ref{computation} and the K\"unneth formula 
it is easy to construct a self map 
\[
 f'_{m,n}:   X_{n} \times K(\pi_{n+1}, n+1) \to X_{n} \times K(\pi_{n+1}, n+1)
\]
which induces an isomorphism in rational cohomology and the zero map in reduced $\Z/m$-cohomology. 
But then the composition 
\[
  f_{m, n+1} : X_{n+1} \stackrel{\beta_n}{\longrightarrow} X_{n} \times K(\pi_{n+1}, n+1) \stackrel{f'_{m,n}}{\longrightarrow} X_{n} \times K(\pi_{n+1}, n+1) 
  \stackrel{\alpha_n}{\longrightarrow}  X_{n+1}
\]
is as required. 
\end{proof}

For further use we isolate the following information from the proof of Proposition \ref{multi}. 

\begin{cor} \label{decomp} For each $n$ there are
maps $\alpha_n: X_{n} \times K(\pi_{n+1}, n+1) \to X_{n+1}$ 
and $\beta_{n} : X_{n+1} \to X_{n} \times K(\pi_{n+1}, n+1)$ inducing
isomorphisms in rational cohomology. 
\end{cor} 

Theorem \ref{existence} for simple spaces now follows from the next proposition. 

\begin{prop} \label{exist_gamma} Let $X$ be a connected 
finite CW-complex which is a simple topological space and whose 
rational cohomology algebra  is free. Then there is 
a continuous map 
 \[
   \psi : X \times X \to X 
\]
 so that for all $x,y  \in X$ the maps $\psi(x, -): X \to X$ and $\psi(-,y): X \to X $ induce isomorphisms in rational cohomology. 
\end{prop}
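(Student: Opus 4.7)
The plan is to construct $\psi$ by induction up the Postnikov tower $\{X_n\}$ of $X$, and then to descend to $X$ itself using that $X$ is finite-dimensional.

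First I would build, for each $n \geq 1$, a family of compatible maps $\psi_n : X_n \times X_n \to X_n$ whose restrictions $\psi_n(x_0, -)$ and $\psi_n(-, x_0)$ induce isomorphisms on rational cohomology. For the base case $n = 1$, the space $X_1 = K(\pi_1(X), 1)$ is an abelian topological group (since $X$ is simple), and I take $\psi_1$ to be its group multiplication, whose restrictions are translations and hence homotopic to the identity.

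For the inductive step, the principal Postnikov fibration $p_{n+1} : X_{n+1} \to X_n$ has fibre $K(\pi_{n+1}, n+1)$ and $k$-invariant $k_n \in H^{n+2}(X_n; \pi_{n+1})$. The obstruction to lifting $\psi_n \circ (p_{n+1} \times p_{n+1})$ through $p_{n+1}$ is the pullback of $k_n$; by Lemma \ref{torsion}, $(k_n)_{\Q} = 0$, so $k_n$ is a torsion class, annihilated by some $\kappa > 0$. Applying Proposition \ref{multi} with $m$ a sufficient multiple of $\kappa$, together with a Bockstein argument (using the short exact sequence $0 \to \pi_{n+1} \xrightarrow{\kappa} \pi_{n+1} \to \pi_{n+1}/\kappa \to 0$ to write $k_n = \beta(k_n')$), I obtain a self-map $f_n : X_n \to X_n$ that is a rational isomorphism and satisfies $f_n^*(k_n) = 0$. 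Replacing $\psi_n$ by $f_n \circ \psi_n$ preserves the inductive rational-iso hypothesis on restrictions and kills the obstruction, producing a lift $\psi_{n+1}$. Any two lifts differ by a class in $H^{n+1}(X_{n+1} \times X_{n+1}; \pi_{n+1})$, and via its K\"unneth decomposition I would independently adjust the restrictions $\psi_{n+1}(x_0, -)$ and $\psi_{n+1}(-, x_0)$ so that their induced fibre maps $K(\pi_{n+1}, n+1) \to K(\pi_{n+1}, n+1)$ are rationally invertible. Because $(k_n)_{\Q} = 0$ forces the rational Serre spectral sequence of $p_{n+1}$ to collapse at $E_2$, combining a rational iso on the base with a rational iso on the fibre then gives the required rational-iso property for the restrictions of $\psi_{n+1}$.

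Finally, since $X$ is a finite CW complex of some dimension $d$, the inclusion $X \hookrightarrow X_n$ is an $(n+1)$-equivalence, so for $n \geq 2d$ the composition $X \times X \hookrightarrow X_n \times X_n \xrightarrow{\psi_n} X_n$ is homotopic to a map factoring through $X$; this produces the desired $\psi : X \times X \to X$. On $H^*(X; \Q)$, supported in degrees $\leq d$, the map $\psi^*$ agrees with $\psi_n^*$ under the rational isomorphism $H^*(X_n; \Q) \cong H^*(X; \Q)$, so $\psi(x_0, -)$ and $\psi(-, x_0)$ are rational cohomology isomorphisms; path-connectedness of $X$ then promotes this to the asserted property for all $p, q \in X$. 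The hardest step is the inductive lifting: Proposition \ref{multi} is essential for killing the integral torsion obstruction while preserving rational behaviour, and the K\"unneth decomposition of $H^{n+1}(X_{n+1} \times X_{n+1}; \pi_{n+1})$ provides just enough flexibility to independently control both restrictions of $\psi_{n+1}$ at the fibre level.
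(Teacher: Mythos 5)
Your overall skeleton is the same as the paper's: induct up the Postnikov tower, exploit the vanishing of the rationalized $k$-invariants (Lemma \ref{torsion}) and Proposition \ref{multi}, and finally compress $\psi_n$ for $n>2\dim X$ into $X$ by cellular approximation, using path-connectedness to pass from the base point to all $p,q$. Where you genuinely diverge is the inductive step. The paper never lifts $\psi_n\circ(p_{n+1}\times p_{n+1})$ and never has to control the fibre behaviour of an unknown lift: it reuses the two maps $\alpha\colon X_n\times K(\pi_{n+1},n+1)\to X_{n+1}$ and $\beta\colon X_{n+1}\to X_n\times K(\pi_{n+1},n+1)$ already constructed in the proof of Proposition \ref{multi}, both rational cohomology isomorphisms, and simply sets $\psi_{n+1}=\alpha\circ\psi'\circ(\beta\times\beta)$, where $\psi'$ is the product of $\psi_n$ with the addition on the Eilenberg--MacLane factor; the restrictions are then rational isomorphisms by construction. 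Your route instead lifts $f_n\circ\psi_n\circ(p_{n+1}\times p_{n+1})$ and then corrects the lift by difference classes, which forces you to control the induced fibre maps by hand.

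That correction step is exactly where your argument has a gap: you assert, "via the K\"unneth decomposition", that the restrictions can be adjusted so that their fibre maps $K(\pi_{n+1},n+1)\to K(\pi_{n+1},n+1)$ become rationally invertible, but you never check that the required adjusting classes exist. What you can add to the fibre map of $\psi_{n+1}(x_0,-)$ is only the image of the restriction $H^{n+1}(X_{n+1};\pi_{n+1})\to H^{n+1}(K(\pi_{n+1},n+1);\pi_{n+1})\cong\mathrm{Hom}(\pi_{n+1},\pi_{n+1})$, and by the Serre spectral sequence of $p_{n+1}$ this image is the kernel of the transgression $\phi\mapsto\phi_*(k_n)$, not all of $\mathrm{Hom}$; in particular the identity need not be realizable. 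The step can be repaired: since $\kappa k_n=0$, every $t\kappa\cdot\mathrm{id}$ lies in that kernel, the classes $\mathrm{pr}_1^*d_1$ and $\mathrm{pr}_2^*d_2$ restrict trivially to the opposite axis (so the two restrictions really are adjustable independently), and $\det_{\Q}\bigl((\phi_i\otimes\Q)+t\kappa\cdot\mathrm{id}\bigr)$ is a polynomial in $t$ with leading coefficient a power of $\kappa$, hence nonzero for suitable $t$. Without some such argument the crucial claim is only an assertion. A smaller slip: the sequence $0\to\pi_{n+1}\xrightarrow{\kappa}\pi_{n+1}\to\pi_{n+1}/\kappa\to 0$ you use for the Bockstein is not exact when $\pi_{n+1}$ has $\kappa$-torsion; to get $f_n^*(k_n)=0$ one should split $\pi_{n+1}$ into cyclic summands (or compose the self-maps furnished by Proposition \ref{multi} for the several relevant moduli), which is the argument implicit in the paper's proof of Proposition \ref{multi}.
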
 

\begin{proof} It is clear that such a map 
exists on $X_0$. Assume that we have already constructed a map $\psi_n : X_n \times X_n \to X_n$ 
with the required properties. 
 
Together with the product on $K(\pi_{n+1} , n+1)$
induced by the addition map on $\pi_{n+1}$ we obtain a multiplication $\psi_{n+1}'$ 
on $X_{n+1}' := X_n \times K(\pi_{n+1},n+1)$ 
with the required properties. Now we define $\psi_{n+1}$ as the composition 
\[  
  X_{n+1} \times X_{n+1} \stackrel{\beta_{n} \times \beta_{n} }{\longrightarrow}  X_{n+1}' \times 
   X_{n+1}' \stackrel{\psi_{n+1}'}{\longrightarrow} X'_{n+1} \stackrel{\alpha_{n}}{\longrightarrow} X_{n+1} .
\]
where $\alpha_{n}$ and $\beta_{n}$ are taken from Corollary \ref{decomp}.

Once we have constructed $\psi_{n}$ with $n > 2 \dim X$, the construction of $\psi$ is complete by 
the cellular approximation theorem. 
\end{proof} 

For the next notions and results compare \cite[Sections 3.1. and 3.2]{MP}. 
A path connected CW complex  $X$ is called {\em nilpotent}, if $\pi_1(M)$ is 
a nilpotent group and acts nilpotently on the higher homotopy groups. 
Note that simple complexes are automatically nilpotent. 
For a nilpotent complex the fibrations $p_{n+1}: X_{n+1} \to X_n$ in the Postnikov decomposition 
are in general not principal, but they admit 
{\em finite principal refinements}
\[
      X_{n+1} =: Y_{r_n} \stackrel{q_{r_n}}{\longrightarrow} Y_{r_n - 1} \stackrel{q_{r_n-1}}{\longrightarrow} \ldots Y_1 \stackrel{q_1}{\longrightarrow} Y_0 := X_n,
 \]
 where each $q_i : Y_i \to Y_{i-1}$ is a principal fibration with fibre $K(G_i, n+1)$ for some abelian group $G_i$, which 
 is classified by a cohomology class in $H^{n+2}(Y_{i-1} ; G_i)$. The same argument as in the proof 
 of Proposition \ref{exist_gamma} then shows:

 \begin{prop} \label{exist_nilpotent} Let $X$ be a connected finite CW complex which is a nilpotent topological space. Assume that 
 the rational cohomology algebra of $X$ is free. Then there is a map 
 $\psi: X \times X \to X$ so that  for all $x,y  \in X$ the maps $\psi(x,-) : X \to X$ and $\psi(-,y): X \to X$ 
 induce isomorphisms in rational cohomology. 
 \end{prop}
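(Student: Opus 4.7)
The plan is to imitate the induction carried out in the proof of Proposition \ref{exist_gamma}, but now ascending the Postnikov tower of $X$ one principal refinement step at a time instead of one Postnikov stage at a time. At each level we will simultaneously produce a multiplication with the required property and the auxiliary maps $\alpha, \beta$ of Proposition \ref{multi}. Since simple CW complexes are nilpotent, the base of the induction (the $n=1$ case, i.e.\ $X_1 = B\pi_1(X)$ with $\pi_1(X)$ abelian) is already covered.

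First I would extend Lemmas \ref{computation} and \ref{torsion} and Proposition \ref{multi} from the Postnikov stages $X_n$ to each intermediate space $Y_i$ in the finite principal refinement
\[
X_{n+1} = Y_{r_n} \xrightarrow{q_{r_n}} Y_{r_n-1} \to \cdots \to Y_0 = X_n.
\]
The fibre at step $i$ is a $K(G_i, n+1)$ with $G_i$ a finitely generated abelian group (the successive subquotients of a central series for the $\pi_1$-action on $\pi_{n+1}(X)$), so Lemma \ref{computation} applies verbatim. By induction along $i$, paralleling the induction on $n$ in Lemma \ref{torsion}, I would show that each $H^*(Y_i;\Q)$ is a free algebra and that the rationalized $k$-invariant $(k_i)_\Q \in H^{n+2}(Y_{i-1}; G_i \otimes \Q)$ classifying $q_i$ vanishes. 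This vanishing is then enough to run the two diagram chases of Proposition \ref{multi} for each $q_i$, producing maps
\[
\beta_i: Y_i \to Y_{i-1} \times K(G_i, n+1), \qquad \alpha_i: Y_{i-1} \times K(G_i, n+1) \to Y_i
\]
that induce isomorphisms in rational cohomology by a spectral sequence argument.

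With these pieces in place the inductive step of Proposition \ref{exist_gamma} transfers directly: given a multiplication $\psi_{i-1}$ on $Y_{i-1}$ with the required property, combine it with the loop-space multiplication on $K(G_i,n+1)$ to obtain $\psi'$ on $Y_{i-1} \times K(G_i, n+1)$, and then set
\[
\psi_i := \alpha_i \circ \psi' \circ (\beta_i \times \beta_i).
\]
Iterating through $i = 1, \dots, r_n$ lifts $\psi_n$ from $X_n$ to $\psi_{n+1}$ on $X_{n+1}$, and once $n > 2\dim X$ cellular approximation yields the desired $\psi$ on $X$ itself.

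The main obstacle is the rational triviality of the refinement $k$-invariants $(k_i)_\Q$: in Lemma \ref{torsion} this rests on freeness of $H^*(X;\Q)$ together with the fact that the canonical map $X \to X_n$ factors through all higher Postnikov stages, and the analogous statement for the intermediate $Y_i$ has to be checked by tracking the spectral sequence of each principal refinement and verifying that freeness of the cohomology ring is preserved at every intermediate stage. Everything else — the construction of $\alpha_i, \beta_i$ and the definition of $\psi_i$ — is then formally identical to the simple case.
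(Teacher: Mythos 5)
Your proposal is correct and is essentially the paper's own argument: the paper proves Proposition \ref{exist_nilpotent} by running the induction of Propositions \ref{multi} and \ref{exist_gamma} over the finite principal refinements $Y_0 = X_n \to \cdots \to Y_{r_n} = X_{n+1}$, exactly as you describe, with the fibres $K(G_i,n+1)$ handled by Lemma \ref{computation} (via the splitting of the finitely generated abelian $G_i$ into cyclic factors) and the vanishing of the rationalized refinement $k$-invariants established as in Lemma \ref{torsion}. Your write-up just makes explicit the steps the paper summarizes as ``the same argument as in the proof of Proposition \ref{exist_gamma}''.
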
 

This proposition immediately implies Theorem \ref{existence}. 

\begin{lem} \label{simple_space}  Let $M = \LieG / \LieH$ be a homogeneous space where $\LieG$ is a connected Lie group 
and $\LieH < \LieG$ 
be a closed connected subgroup. Then $M$ is a simple topological space.
\end{lem}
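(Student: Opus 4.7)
The plan is to use the principal bundle $H \hookrightarrow G \to G/H$ together with the action of $G$ on $M=G/H$ by left translations. I expect both conditions in the definition of \emph{simple} to follow easily from path lifting in this bundle, the fact that $\pi_1$ of a topological group is abelian, and the connectedness of $H$.

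First I would verify that $\pi_1(M)$ is abelian. Since $H$ is connected, $\pi_0(H)=0$, and the long exact homotopy sequence of the fibration ends with a surjection $\pi_1(G)\twoheadrightarrow\pi_1(M)$. As $G$ is a connected topological group, $\pi_1(G)$ is abelian (by the Eckmann–Hilton argument applied to the H-space structure on $G$), and hence so is its quotient $\pi_1(M)$.

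The crux is showing that $\pi_1(M)$ acts trivially on $\pi_n(M)$ for every $n\ge 1$. Given $[\alpha]\in\pi_1(M)$ represented by a loop $\alpha$ at $eH$, the surjection above lets me lift $\alpha$ to a path $\tilde\alpha:[0,1]\to G$ with $\tilde\alpha(0)=e$ and $\tilde\alpha(1)=:h\in H$. The family of left translations $L_{\tilde\alpha(t)}:M\to M$, $xH\mapsto\tilde\alpha(t)\cdot xH$, is a free self-homotopy from $\Id_M$ to $L_h$ during which the basepoint $eH$ traces out precisely the loop $\alpha$. By the standard description of the $\pi_1$-action on higher homotopy groups in terms of such basepoint-transporting homotopies, for any based $\phi:(S^n,*)\to (M,eH)$ the composition $L_{\tilde\alpha(t)}\circ\phi$ shows that $[\alpha]\cdot[\phi]=[L_h\circ\phi]$ in $\pi_n(M)$.

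Finally, the connectedness of $H$ closes the argument. Choosing a path $\gamma:[0,1]\to H$ from $h$ to $e$ and setting $F_t:=L_{\gamma(t)}$ gives a \emph{based} homotopy from $L_h$ to $\Id_M$, because $\gamma(t)\in H$ fixes the basepoint $eH$ for all $t$. Therefore $L_h$ induces the identity on $\pi_n(M)$, and $[\alpha]\cdot[\phi]=[\phi]$, establishing triviality of the action. The only real subtlety I anticipate is the second step, where one must carefully match the algebraic $\pi_1$-action on $\pi_n$ with the geometric picture of a free homotopy whose basepoint sweeps out the given loop; once path lifting in the bundle is in hand this becomes a routine verification.
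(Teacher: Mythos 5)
Your proof is correct and follows essentially the same route as the paper: both use the bundle $H\to G\to G/H$ with $H$ connected to relate $\pi_1(G)$ and $\pi_1(G/H)$, the abelianness of $\pi_1$ of a topological group, and left translations of $G$ on $G/H$ to trivialize the $\pi_1$-action on $\pi_n$. The only cosmetic difference is that the paper lifts the loop (up to homotopy) to a based loop in $G$ and homotopes $\mu * f$ to $f$ directly, whereas you lift it to a path ending at some $h\in H$ and then use connectedness of $H$ to produce a based homotopy from $L_h$ to the identity.
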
 

\begin{proof} This fact is well known and we include a proof for the readers' convenience. 
Because $\LieH$ is connected we get an exact sequence
\[
    \ldots \to \pi_2(\LieG/\LieH) \to \pi_1(\LieH) \to \pi_1(\LieG) \to \pi_1(\LieG/\LieH) \to 1.
\]
The topological group $\LieG$ has abelian fundamental group, and  hence the same holds for $\LieG/\LieH$. 

Next, let $f : S^n \to \LieG/\LieH$ and $\mu : S^1 \to \LieG/\LieH$ be based maps, where we take the south pole of 
any sphere as base point.
Recall that $[\mu]_*([f]) \in \pi_n(\LieG/\LieH)$, the result of the action of $[\mu] \in \pi_1(\LieG/\LieH)$
on $[f]\in \pi_n(\LieG/\LieH)$, 
 is represented by the following map $\mu *  f : S^n \to \LieG/\LieH$. Consider the one point union 
$S^n \vee S^n$ where the second sphere is piled above the first one, identifying the north pole of the 
first with the south pole of the second. We take the south pole of the first sphere as base point of $S^n \vee S^n$. 
Now consider the composition $S^n \stackrel{\text{height}}{\longrightarrow}  [0,1] \stackrel{\mu}{\rightarrow} \LieG/\LieH$ 
on the lower sphere, and the map $f : S^n \to \LieG/\LieH$ on the upper sphere, and compose 
the resulting map $S^n \vee S^n \to \LieG/\LieH$ 
with the base point preserving coproduct $S^n \to S^n \vee S^n$. This defines $\mu * f$. 
Note that for $n=1$ this results in the usual conjugation action of $\pi_1(\LieG/\LieH)$ on itself. 

By  the above exact sequence the map $\mu$ lifts to a based
map $\overline{\mu} : S^1 \to \LieG$. Using the left multiplication  of $\LieG$ on $\LieG/\LieH$ and 
the above explicit description of $\mu * f$ 
it is easy to show that $\mu * f$ is based homotopic to $f$. \end{proof}

Together with our previous results this implies Corollary \ref{homogenous}.  





\begin{rem} Of course the above argument is modeled along the lines of 
rational homotopy theory. In particular our Proposition \ref{multi} is implied
by \cite[Theorem (12.2)]{Sul} (note that by Lemma \ref{torsion} our $X$ is a formal space 
in the sense of rational homotopy theory). However we found it somewhat difficult to trace 
a complete  proof of this theorem in the literature. 
We feel that  this fact and the special focus of our paper justifies 
the  ad hoc, but self-contained discussion above  instead of an in-depth exploration 
of rational homotopy theory. 
\end{rem} 

We are grateful to \textsc{Dieter Kotschick} for pointing out the following lemma and corollary.

\begin{lem}\label{LEM: virtually abelian} Each $\Gamma$-manifold has virtually abelian fundamental group. 
\end{lem}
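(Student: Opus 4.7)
The plan is to combine two ingredients: a classical consequence of nonzero mapping degree on fundamental groups, and the bilinearity of $\psi_*$ forced by commutativity in the product group $\pi_1(M)\times\pi_1(M)$.

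First I would fix a basepoint $p\in M$ and record the standard fact that any nonzero degree map $f\colon M\to N$ between closed oriented $n$-manifolds of the same dimension induces a $\pi_1$-map whose image has finite index. The quick argument is by contradiction: if the image had infinite index, the pullback of the corresponding covering of $N$ would give a lift of $f$ into a non-compact $n$-manifold, where the top-dimensional integral homology vanishes, forcing the degree to be zero. Applying this to the self-maps $\alpha := (\psi^p)_*$ and $\beta := (\psi_p)_*$ of $\pi_1(M,p)$, both $\alpha(\pi_1(M))$ and $\beta(\pi_1(M))$ are finite-index subgroups of $\pi_1(M)$.

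Next I would exploit the fact that on $\pi_1(M\times M,(p,p)) = \pi_1(M,p)\times\pi_1(M,p)$ one has $(a,b) = (a,e)(e,b) = (e,b)(a,e)$. Because $\psi|_{M\times\{p\}} = \psi^p$ and $\psi|_{\{p\}\times M} = \psi_p$, applying $\psi_*$ gives
\[
\psi_*(a,b) \;=\; \alpha(a)\,\beta(b) \;=\; \beta(b)\,\alpha(a)
\]
for every $a,b\in\pi_1(M,p)$. Thus every element of $\alpha(\pi_1(M))$ commutes with every element of $\beta(\pi_1(M))$.

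Setting $N := \alpha(\pi_1(M))\cap\beta(\pi_1(M))$, the intersection of two finite-index subgroups is of finite index, so $[\pi_1(M):N]<\infty$. Any element of $N$ lies in both factors and therefore commutes with everything in $\alpha(\pi_1(M))$ and everything in $\beta(\pi_1(M))$, in particular with every element of $N$. Hence $N$ is an abelian finite-index subgroup of $\pi_1(M)$, proving the lemma. The only step that requires any care is the first one (finite-index image of a nonzero degree map); the rest is a short algebraic manipulation with the Eckmann--Hilton-type identity forced by the product structure on $M\times M$.
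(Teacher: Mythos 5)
Your argument is correct and is essentially the paper's own proof: finite-index images of $(\psi_p)_*$ and $(\psi^q)_*$ from the nonzero degrees (you just spell out the covering-space/top-homology justification that the paper leaves implicit), elementwise commutation of the two images from $\psi_*$ being a homomorphism on $\pi_1(M)\times\pi_1(M)$, and then the intersection is an abelian finite-index subgroup. The only cosmetic difference is your basepoint bookkeeping (taking $q=p$ and ignoring that $\psi(p,p)$ may differ from $p$), which is harmless since conjugating by a fixed path preserves both finite index and the commutation relation.
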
 

\begin{proof} Let $\psi: M \times M \to M$ be a $\Gamma$-structure and  $x, y \in M$. Without loss of generality
we can assume  that $\psi_x, \psi^y  : M \to M$ preserve a base point in $M$. 
Let $H_1, H_2 < \pi_1(M)$ be defined as the images of 
\[
      (\psi_x)_* : \pi_1(M) \to \pi_1(M) \; ,\quad  (\psi^y)_* : \pi_1(M) \to \pi_1(M) . 
\]
The map  $\psi_{x}, \psi^y : M \to M$ factor through the connected coverings of $M$ 
defined by $H_1$ and $H_2$, respectively. Because $\psi_x$ and $\psi^y$ have non-zero 
mapping degrees, these coverings are finite and hence $H_1$ and $H_2$ are of finite index in $\pi_1(M)$. 
This implies that also $H_1 \cap H_2 < \pi_1(M)$ is of finite index. 

The map $\psi_* : \pi_1(M) \times \pi_1(M) \to \pi_1(M)$ being a group homomorphisms, elements 
in $H_1$ commute with elements in $H_2$. This implies that the finite index 
subgroup $H_1 \cap H_2 < \pi_1(M)$ is 
abelian. 
\end{proof} 

Together with Theorem \ref{existence} this implies

\begin{cor} \label{free_virtab} Let $M$ be a closed connected oriented manifold which is 
nilpotent as a topological space. If  $H^*(M; \Q)$ is a free algebra -
necessarily over odd degree generators - then $\pi_1(M)$ is virtually abelian. 
\end{cor}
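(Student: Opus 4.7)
The plan is essentially to chain the two preceding results. By the hypotheses, $M$ is a closed connected oriented nilpotent manifold with $H^*(M;\Q)$ a free graded commutative algebra, so Theorem \ref{existence} directly applies and produces a $\Gamma$-structure $\psi: M \times M \to M$.

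Once such a $\psi$ is in hand, Lemma \ref{LEM: virtually abelian} finishes the job: any $\Gamma$-manifold has virtually abelian fundamental group, so $\pi_1(M)$ is virtually abelian. No further argument is required, and there is no real obstacle since both ingredients have already been established; the corollary is just the formal conjunction of the existence statement (Theorem \ref{existence}) with the structural constraint on $\pi_1$ that $\Gamma$-structures impose (Lemma \ref{LEM: virtually abelian}). The parenthetical remark that the free generators must lie in odd degrees is also inherited verbatim from Theorem \ref{existence}, since $M$ is finite dimensional.

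Thus the proof is a single sentence: apply Theorem \ref{existence} to obtain a $\Gamma$-structure on $M$, then apply Lemma \ref{LEM: virtually abelian} to conclude that $\pi_1(M)$ is virtually abelian.
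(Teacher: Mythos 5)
Your proposal is correct and is exactly the paper's argument: the corollary is stated immediately after Lemma \ref{LEM: virtually abelian} as the combination of that lemma with Theorem \ref{existence}, which is precisely the two-step chain you describe.
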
 

It remains an interesting open problem 
whether this conclusion can be drawn without the use of Theorem \ref{existence}.

\section{Canonical products on  symmetric spaces}
\label{SEC: can prod sym space}
In this section we prove Theorem \ref{THM: classification}. One implication is a special case 
of Hopf's theorem \cite{Ho1, Ho2}. Let $P$ be 
a compact symmetric space whose isotropy groups within its transvection group are connected. 
This assumption implies that $P$ is orientable. 
 We are left to show that if the rational cohomology  
 $H^*(P;\Q)$ of $P$ is a free algebra, then the
 canonical product  $\Theta$  on  $P$ defined in Equation 
 \eqref{EQ: Product sym space} is a $\Gamma$-structure.\par 

We first observe that the degree of the map
 $\Theta_x:P\to P,\; y\mapsto \Theta(x,y)=s_x(y)$
with $x\in P$ fixed is $$\det(\Theta_x)=(-1)^{\dim(P)}.$$ \par
For fixed $y\in P$ we will examine  the map
 \begin{equation}
\label{EQ: theta}
 \theta:=\Theta^y:P\to P,\quad x\mapsto  \Theta(x,y)=s_x(y).
\end{equation}
If the degree of $\theta$ does not vanish, then
$\Theta$ is a $\Gamma$-structure. 
We will reduce our considerations to irreducible simply connected symmetric spaces of compact type. 
As arguments we use some features of compact symmetric spaces, which 
can be found in the classical literature such as  \cite{He,L-I, L-II}
or \cite[Chap.\ 8]{W}.\par 

We start with three preliminary lemmata.
Just like compact Lie groups (see e.\ g.\ \cite[Thm.\ 4.29, p.\ 198]{Knapp}) compact symmetric spaces 
admit finite covers that split off flat factors:

\begin{lem}
\label{LEM: split covering}
 Every compact symmetric space $P$ is finitely covered by a product 
 $T\times \widetilde{Q}$ of a flat torus $T$ and a simply connected compact symmetric space
 $\widetilde{Q}.$ 
\end{lem}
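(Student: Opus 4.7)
The plan is to realize $P$ as $\LieG/\LieK$ with $\LieG := \Iso^0(P)$ and $\LieK$ the isotropy at a base point, and then exploit the structure theory of compact connected Lie groups: $\LieG$ admits a finite cover of the form
$$ \pi : \LieG^* := T \times \LieG_{\mathrm{ss}} \longrightarrow \LieG, $$
where $T$ is the identity component of the center of $\LieG$ (a torus) and $\LieG_{\mathrm{ss}}$ denotes the universal cover of $[\LieG, \LieG]$ (a simply connected compact semisimple Lie group). Writing $\LieK^* := \pi^{-1}(\LieK)$ and $\LieK^{*,0}$ for its identity component, the natural map $\LieG^*/\LieK^{*,0} \to \LieG^*/\LieK^* = P$ is a finite covering, since $\LieK^*/\LieK^{*,0}$ is a finite component group ($\LieK^*$ being compact). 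The whole argument reduces to showing $\LieK^{*,0} \subset \{1\} \times \LieG_{\mathrm{ss}}$: then $\LieK^{*,0} = \{1\} \times \LieK''$ for some compact connected $\LieK'' \subset \LieG_{\mathrm{ss}}$, yielding
$$ \LieG^*/\LieK^{*,0} \;=\; T \times (\LieG_{\mathrm{ss}}/\LieK'') \;=:\; T \times \tilde{Q}. $$
Here $\tilde{Q}$ is simply connected (as $\LieG_{\mathrm{ss}}$ is simply connected and $\LieK''$ is connected) and symmetric, because the Cartan involution $\sigma$ of $(\LieG, \LieK)$ preserves the ideal $[\lieG, \lieG]$ and thus lifts to an involution of $\LieG_{\mathrm{ss}}$ whose fixed-point identity component contains $\LieK''$.

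The required containment $\LieK^{*,0} \subset \{1\} \times \LieG_{\mathrm{ss}}$ amounts to the Lie-algebra statement $\lieK \subset [\lieG, \lieG]$. I would deduce this as follows. Since $\LieG = \Iso^0(P)$ acts effectively on $P$, no non-zero ideal of $\lieG$ is contained in $\lieK$; applied to $\mathfrak{z}(\lieG) \cap \lieK$ (which is automatically an ideal of $\lieG$ as its elements are central), this forces $\mathfrak{z}(\lieG) \cap \lieK = 0$. In the Cartan decomposition $\lieG = \lieK \oplus \lieP$, this means $\mathfrak{z}(\lieG) \subset \lieP$, so $\sigma$ acts as $-\Id$ on $\mathfrak{z}(\lieG)$. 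Decomposing an arbitrary $X \in \lieK$ along the $\sigma$-invariant ideal splitting $\lieG = \mathfrak{z}(\lieG) \oplus [\lieG, \lieG]$ as $X = X_T + X_{\mathrm{ss}}$, the identities $\sigma X = X$ and $\sigma X_T = -X_T$ together force $X_T = 0$, hence $X = X_{\mathrm{ss}} \in [\lieG, \lieG]$.

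The main obstacle I expect is precisely this bridging step between the geometric hypothesis and the Lie-algebraic splitting: it is the only point that genuinely uses the symmetric structure (via the Cartan involution) together with effectiveness of the isometry action. All remaining ingredients -- existence of the product cover $\LieG^*$, the identification of $\LieG^*/\LieK^{*,0}$ as $T \times \tilde{Q}$, and the simple-connectedness and symmetric-space structure of $\tilde{Q}$ -- are routine once $\lieK$ has been confined to the semisimple part of $\lieG$.
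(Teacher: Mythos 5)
Your argument is correct, but it follows a genuinely different route from the paper. The paper never passes through the isometry group's structure theory: it works on the universal cover $\tilde{P}\cong\R^k\times\tilde{Q}$ and its deck transformation group $\Delta$, which by results of Sakai and Wolf is a discrete subgroup of the abelian centralizer of the transvection group; since that centralizer is finite on the compact-type factor $\tilde{Q}$, the subgroup $\Delta^N$ (for a suitable $N$) has finite index in $\Delta$ and acts only on the $\R^k$-factor, so $\tilde{P}/\Delta^N\cong T\times\tilde{Q}$ is the desired finite cover; here compactness of $T$ comes for free from compactness of $\tilde{P}/\Delta^N$, and the Riemannian product structure descends directly from the universal cover. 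You instead realize $P=\LieG/\LieK$ with $\LieG=\Iso^0(P)$, invoke the standard finite cover $Z(\LieG)^0\times\widetilde{[\LieG,\LieG]}\to\LieG$ (the Knapp result the paper alludes to), and reduce everything to the Lie-algebraic inclusion $\lieK\subset[\lieG,\lieG]$, which you derive correctly from effectiveness (so $\mathfrak{z}(\lieG)\cap\lieK=0$) together with the $\sigma$-invariance of the splitting $\lieG=\mathfrak{z}(\lieG)\oplus[\lieG,\lieG]$; this buys an explicit homogeneous description $T\times(\LieG_{\mathrm{ss}}/\LieK'')$ of the covering space and makes the simple connectedness of $\tilde{Q}$ transparent, at the cost of some isotropy-group bookkeeping and of relying on the standard facts that $\lieK$ is exactly the $+1$-eigenspace of $\sigma$ (i.e.\ $(\LieG^\sigma)^0\subset\LieK\subset\LieG^\sigma$) and that $\Iso^0(P)$ acts effectively. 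Two small points you gloss over but which are easily supplied: the passage from $\mathfrak{z}(\lieG)\cap\lieK=0$ to $\mathfrak{z}(\lieG)\subset\lieP$ uses that $\sigma$ preserves the center; and since the lemma is later combined with the statement on Riemannian coverings, one should note that the pulled-back $\LieG^*$-invariant metric on $T\times\tilde{Q}$ really is a Riemannian product of a flat metric and a symmetric metric --- this holds because $\mathfrak{z}(\lieG)$ is precisely the $\Ad(\LieK)$-fixed part of $\lieP$, hence orthogonal to $\lieP\cap[\lieG,\lieG]$ for any invariant metric, and the central directions are flat.
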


\begin{proof}
The deck transformation group $\Delta$ of the universal cover $\widetilde{P}$ of $P$  is a finitely
generated
discrete subgroup of the abelian centralizer $C_{\Iso(\widetilde{P})}(\Trans(\widetilde{P}))$
of the transvection group $\Trans(\widetilde{P})$ of $\widetilde{P}$ in its 
isometry group (see \cite[Lem.\ 1.2, p.\ 194]{Sakai} 
and \cite[Thm.\ 8.3.11, p.\ 244]{W}).
Since $\widetilde{P}\cong\R^k\times \widetilde{Q},$ where $\widetilde{Q}$ is a simply connected compact 
symmetric space, 
the isometry group of $\widetilde{P}$ 
splits as $\Iso(\widetilde{P})=\Iso(\R^k)\times \Iso(\widetilde{Q}).$\par
Thus any element of $\Delta$ has the form  $f\times g$ for 
some $f\in C_{\Iso(\R^k)}(\Trans(\R^k))\cong\R^k$ and
some $g\in C_{\Iso(\widetilde{Q})}(\Trans(\widetilde{Q})).$ 
Since $\widetilde{Q}$ is a symmetric space of compact type 
$C_{\Iso(\widetilde{Q})}(\Trans(\widetilde{Q}))$ is finite. Let $N$ be 
a common multiple of the orders of elements of $C_{\Iso(\widetilde{Q})}(\Trans(\widetilde{Q})),$
then $g^N=e$ for all $g\in C_{\Iso(\widetilde{Q})}(\Trans(\widetilde{Q})).$
Since $\Delta$ is abelian, the $N$-th power is an endomorphism of $\Delta$ whose image $\Delta^N$
acts  trivially on the $\widetilde{Q}$ factor of $\widetilde{P}.$ 
As $\Delta^N$ has finite index in $\Delta,$ the space 
$\widetilde{P}/\Delta^N\cong T\times\widetilde{Q},$ which is the desired cover of $P,$ is compact. 
\end{proof}

\begin{lem}
\label{LEM: Riemannian products}
Let $P_1$ and $P_2$ be two compact oriented symmetric spaces.
 Then the canonical product on $P_1\times P_2$ is a $\Gamma$-structure 
 if and only if the canonical products on $P_1$ and on $P_2$ are both
$\Gamma$-structures. 
\end{lem}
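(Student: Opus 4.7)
The plan is to reduce everything to the well-known multiplicativity of mapping degree under Cartesian products.

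The essential geometric observation is that the geodesic symmetry of a Riemannian product splits as a product of the geodesic symmetries of the factors. More precisely, for any $(x_1,x_2)\in P_1\times P_2$,
\[
s_{(x_1,x_2)} = s_{x_1}\times s_{x_2}.
\]
Indeed, the right hand side is an isometry of $P_1\times P_2$ that fixes $(x_1,x_2)$ and whose differential at that point is $-\Id$, so by uniqueness it coincides with the left hand side. Consequently, writing $\Theta_i$ for the canonical product on $P_i$ ($i=1,2$) and $\Theta$ for the canonical product on $P_1\times P_2$, we obtain
\[
\Theta\bigl((x_1,x_2),(y_1,y_2)\bigr) = \bigl(\Theta_1(x_1,y_1),\Theta_2(x_2,y_2)\bigr),
\]
so $\Theta$ factors as $\Theta_1\times\Theta_2$ after the obvious reshuffling of coordinates.

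Now fix basepoints $p=(p_1,p_2)$ and $q=(q_1,q_2)$. From the splitting above, the two partial maps decompose as
\[
\Theta_p = (\Theta_1)_{p_1}\times(\Theta_2)_{p_2},\qquad \Theta^q = (\Theta_1)^{q_1}\times(\Theta_2)^{q_2}.
\]
With the product orientation on $P_1\times P_2$, the multiplicativity of mapping degree under Cartesian products yields
\[
\deg(\Theta_p) = \deg\bigl((\Theta_1)_{p_1}\bigr)\cdot\deg\bigl((\Theta_2)_{p_2}\bigr),\qquad \deg(\Theta^q) = \deg\bigl((\Theta_1)^{q_1}\bigr)\cdot\deg\bigl((\Theta_2)^{q_2}\bigr).
\]
(The first identity is automatic anyway, since by the remark at the beginning of Section \ref{SEC: can prod sym space} all three factors equal $\pm 1$; it is the second identity that carries content.)

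A product of integers is nonzero precisely when each factor is nonzero, so $\Theta$ is a $\Gamma$-structure on $P_1\times P_2$ if and only if both $\Theta_1$ and $\Theta_2$ are $\Gamma$-structures on $P_1$ and $P_2$, respectively. This proves the lemma. The only mildly delicate point is verifying the splitting of the geodesic symmetry; everything else is formal, and there is no serious obstacle.
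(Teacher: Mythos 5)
Your proof is correct and follows essentially the same route as the paper: the geodesic symmetries of the product split as products of the factors' symmetries, so $\Theta$ splits as $\Theta_1\times\Theta_2$, and multiplicativity of the mapping degree gives the equivalence. The paper only needs to check the map $\theta=\Theta^q$ (since $\deg(\Theta_p)=(-1)^{\dim P}$ is always nonzero, as noted at the start of Section \ref{SEC: can prod sym space}), which you also correctly observe.
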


\begin{proof}
The Riemannian product $P_1\times P_2$ is again a compact  oriented symmetric space and the geodesic 
symmetries of $P_1\times P_2$ are
products of geodesic symmetries of $P_1$ and of 
$P_2.$ From this the claim follows easily by the multiplicativity of 
mapping degrees. 
\end{proof}

\begin{lem}
\label{LEM: Coverings}
Let $p:\widehat{P}\to P$ be an orientation preserving  Riemannian covering
between two compact oriented symmetric spaces.
Then the canonical product on $\widehat{P}$ is a $\Gamma$-structure 
 if and only if the canonical product on $P$ is a $\Gamma$-structure.
\end{lem}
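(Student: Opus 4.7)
The plan is to reduce the lemma to comparing the mapping degrees of $\theta$ on $P$ and of its counterpart $\hat\theta$ on $\hat P$, by showing that the covering map $p$ intertwines these two squaring maps. Fix $\hat q\in\hat P$ and set $q:=p(\hat q)$; define
$$
\hat\theta\colon\hat P\to\hat P,\ \hat x\mapsto s_{\hat x}(\hat q),\qquad
\theta\colon P\to P,\ x\mapsto s_x(q),
$$
as in (\ref{EQ: theta}). Since for both spaces the first-variable restriction $\Theta_p$ has nonzero degree $(-1)^{\dim P}$ (note $\dim\hat P=\dim P$), it suffices to prove that $\deg(\hat\theta)\neq 0$ if and only if $\deg(\theta)\neq 0$.

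The first step is the intertwining identity
$$
p\circ s_{\hat x}\;=\;s_{p(\hat x)}\circ p\qquad\text{for every }\hat x\in\hat P.
$$
Both sides are smooth maps $\hat P\to P$. Each is a local isometry: the right-hand side is the composition of $p$ with a global isometry of $P$, while the left-hand side is the composition of a global isometry of $\hat P$ with $p$. They agree at $\hat x$ (both values equal $p(\hat x)$), and their differentials at $\hat x$ both equal $-d_{\hat x}p$, using $d_{\hat x}s_{\hat x}=-\Id$ and $d_{p(\hat x)}s_{p(\hat x)}=-\Id$. Because $\hat P$ is connected and complete (being compact), the standard rigidity theorem for Riemannian local isometries forces the two maps to coincide. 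Specialising to the second slot, this gives $p\circ\hat\theta=\theta\circ p$.

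To conclude, note that $p$ is a finite orientation preserving covering, so $\deg(p)$ equals its positive sheet number. Multiplicativity of the mapping degree then yields
$$
\deg(p)\cdot\deg(\hat\theta)\;=\;\deg(p\circ\hat\theta)\;=\;\deg(\theta\circ p)\;=\;\deg(\theta)\cdot\deg(p),
$$
and dividing by $\deg(p)\neq 0$ gives $\deg(\hat\theta)=\deg(\theta)$. This proves the equivalence. The only real obstacle is the intertwining identity; everything else is formal. That identity is not entirely automatic, as it rests on the combination of the global isometry property of geodesic symmetries on symmetric spaces, the local isometry property of Riemannian coverings, and the rigidity of local isometries under agreement of first jets.
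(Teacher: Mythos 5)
Your proof is correct and follows essentially the same route as the paper: establish $p\circ\hat\theta=\theta\circ p$, then use multiplicativity of the mapping degree and divide by $\deg(p)$ to get $\deg(\hat\theta)=\deg(\theta)$. The only difference is that you carefully justify the intertwining identity $p\circ s_{\hat x}=s_{p(\hat x)}\circ p$ via rigidity of local isometries, which the paper simply asserts.
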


\begin{proof}
The canonical products $\widehat{\Theta}$ on $\widehat{P}$ and 
$\Theta$ on $P$ are related by
$$\Theta\circ(p\times p)=p\circ \widehat{\Theta}.$$
Let $\widehat{y}\in\widehat{P}$ be a chosen origin and  $y:=p(\widehat{y}).$
Then the maps 
$\widehat{\theta}=\widehat{\Theta}^{\widehat{y}}:\widehat{x}\mapsto \widehat{s}_{\widehat{x}}(\widehat{y})$
and $\theta=\Theta^y: x\mapsto s_x(y)$ satisfy
$p\circ \widehat{\theta}=\theta\circ p.$
Since $p$ is a covering, we get 
$$\deg(p)\deg(\widehat{\theta})=\deg(\theta)\deg(p),$$
where $\deg(p)$ coincides with  the number of sheets of $p.$
Division by $\deg(p)$ yields
$\deg(\widehat{\theta})=\deg(\theta).$
\end{proof}

Using these lemmata, we  can proceed with our proof of Theorem \ref{THM: classification}. 
Let $P$ be a symmetric space as in
Theorem \ref{THM: classification} such that 
$H^*(P;\Q)$ is an exterior algebra generated by homogeneous elements in
odd degrees. By Lemma \ref{LEM: split covering} and the de Rham decomposition for symmetric spaces 
there is a Riemannian product 
$$\widehat{P}:=T\times \widetilde{P}_1\times \dots \times \widetilde{P}_m$$
of a flat torus $T$  (points and circles are  considered zero and one dimensional 
tori) 
and  irreducible simply connected compact symmetric 
spaces $\widetilde{P}_1,\dots, \widetilde{P}_m$, $m \geq 0$,  together with 
a finite Riemannian covering map $p : \widehat{P} \to P$.

\begin{lem}
\label{LEM: cohomologies iso} 
The covering $p:\widehat{P}\to P$ induces an isomorphism between the 
 graded $\Q$-algebras  $H^*(\widehat{P}; \Q)$  and
$H^*(P;\Q).$ 
\end{lem}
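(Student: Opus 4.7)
The plan is to show that $p^{*}\colon H^{*}(P;\Q)\to H^{*}(\hat P;\Q)$ is an isomorphism of graded $\Q$-algebras by proving that the finite deck group $\Gamma$ of $p$ acts trivially on $H^{*}(\hat P;\Q)$. Injectivity of $p^{*}$ and the identification of its image as the invariants $H^{*}(\hat P;\Q)^{\Gamma}$ are standard facts for finite orientation-preserving coverings of closed oriented manifolds, obtained from the transfer $p_{!}$: the relation $p_{!}\circ p^{*}=\deg(p)\cdot\Id$ gives injectivity, and averaging over $\Gamma$ identifies the image. Everything therefore reduces to showing that $\Gamma$ acts trivially on $H^{*}(\hat P;\Q)$.

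To analyse this action I would rewrite the cover group-theoretically. Let $G:=\Iso(P)_{0}$, let $\pi\colon \tilde G\to G$ denote the universal cover, and set $\tilde K:=\pi^{-1}(K)$. Letting $K^{(N)}\subset\tilde K$ be the subgroup containing $\tilde K_{0}$ that corresponds to $\Delta^{N}\subset\Delta=\pi_{1}(P)=\tilde K/\tilde K_{0}$ from Lemma~\ref{LEM: split covering}, the covering tower rewrites as
\[
 \tilde G/\tilde K_{0}\;\longrightarrow\;\tilde G/K^{(N)}\;\longrightarrow\;\tilde G/\tilde K,
\]
identifying the universal cover, $\hat P$, and $P$ respectively. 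Since $\Delta$ is abelian, $K^{(N)}$ is normal in $\tilde K$, so the deck group $\Gamma=\tilde K/K^{(N)}$ acts on $\hat P=\tilde G/K^{(N)}$ by right multiplication.

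The heart of the argument is the factorisation
\[
 \tilde K=\tilde K_{0}\cdot\pi_{1}(G),
\]
which is precisely where the hypothesis that $K$ is connected enters. Given $\tilde k\in\tilde K$, connectedness of $K$ lets me join $\pi(\tilde k)$ to $e$ by a path in $K$; its unique lift to $\tilde G$ starting at $\tilde k$ is a path inside $\tilde K$ terminating at some $z\in\ker\pi=\pi_{1}(G)\subset Z(\tilde G)$, so $\tilde k\in\tilde K_{0}\cdot z$. Combined with $\tilde K_{0}\subset K^{(N)}$ this yields $\tilde K=K^{(N)}\cdot\pi_{1}(G)$, so every coset in $\Gamma$ is represented by some central element $z\in\pi_{1}(G)\subset Z(\tilde G)$.

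The conclusion is then formal: for such a central $z$, right multiplication by $z$ on $\tilde G/K^{(N)}$ coincides with left multiplication by $z$, and the latter is homotopic to the identity through the family of left multiplications along any path from $e$ to $z$ in the connected group $\tilde G$. Consequently $\Gamma$ acts trivially on $H^{*}(\hat P;\Q)$, giving the desired isomorphism. I expect the only non-routine step to be the identity $\tilde K=\tilde K_{0}\cdot\pi_{1}(G)$; everything else is bookkeeping.
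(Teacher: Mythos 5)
Your argument is correct, and it takes a genuinely different route from the paper. The paper offers two proofs: one identifies $H^*(P;\R)$ with the isotropy-invariant elements of $\bigwedge^* T_qP$ via Wolf's theorem and uses that $p$ identifies the linear isotropy representations of the two (connected) isotropy groups; the other observes that $\hat P$ and $P$ are simple spaces on which $p$ induces an isomorphism of rational homotopy groups and then invokes the rational Whitehead--Serre theorem. You instead reduce, via the covering transfer, to showing that the finite deck group acts trivially on $H^*(\hat P;\Q)$, and you prove this by lifting to the universal cover $\tilde G$ of $G=\Iso(P)_0$: connectedness of $K$ gives $\tilde K=\tilde K_0\cdot\ker(\tilde G\to G)$, so every deck transformation is right (equivalently left) translation by a central element of the connected group $\tilde G$ and is therefore homotopic to the identity. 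This is sound; the only points you leave implicit are routine (that $\tilde G/K^{(N)}\to \tilde G/\tilde K$ is a regular cover whose full deck group is $\tilde K/K^{(N)}$ acting by right translations, and that the transfer identity $p^*\circ p_!=\sum_{\gamma}\gamma^*$ needs regularity -- which holds here since $\pi_1(P)$ is abelian; also the transfer exists for any finite covering, so the appeal to orientations and $\deg(p)$ is unnecessary). What the approaches buy: the paper's first proof is specific to symmetric spaces but stays entirely within Cartan--Wolf theory, and its second proof reuses the homotopy-theoretic machinery of Section~\ref{SEC:Homotopy}; your proof is more elementary and self-contained, avoids both Wolf's theorem and Whitehead--Serre, needs no information about the isotropy group or product structure of $\hat P$, and in fact proves the stronger statement that for any homogeneous space $G/K$ with $G$ a connected Lie group and $K$ closed and connected, every finite connected covering of $G/K$ has the same rational cohomology algebra, pulled back isomorphically under the covering map.
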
 

\begin{proof} 
We prove this claim in two different ways. 
First, let $\widehat{y}\in \widehat{P}$ and $y:=p(\widehat{y}).$ 
Let $\widehat{\LieG}$ and $\LieG$ denote the transvection groups 
of $\widehat{P}$ and $P.$ The isotropy group
$\widehat{\LieH}\subset \widehat{\LieG}$ 
of $\widehat{y}$ is connected, because $\widehat{P}$  is a product of a torus and simply connected 
compact symmetric spaces, and the isotropy group $\LieH\subset \LieG$ of $y$
is connected by assumption.
Their linear isotropy actions are identified by $p.$ 
By \cite[Thm.\ 8.5.8]{W} the real cohomology algebra
of any compact symmetric space $S$ is 
isomorphic to the algebra of those elements in $\bigwedge^*T_xS,\; 
x\in S,$ which are invariant under the action of the isotropy subgroup 
of $x$ within the transvection group of $S.$
Thus $p$ induces an isomorphism $H^*(\widehat{P};\R)\cong H^*(P;\R)$ 
and also an isomorphism of the rational cohomologies.\par

Second, the fundamental group $\pi_1(\widehat{P})  < \pi_1(P)$ is abelian and acts trivially on 
$\pi_n(\widehat{P}) = \pi_n(P)$ for $n > 1$, because $P = \LieG / \LieH$ with connected $\LieH$ and
by Lemma \ref{simple_space}. Hence $\widehat{P}$ is a simple space. Furthermore, because $p$ 
is a finite covering, the induced map $\pi_{*}(\widehat{P}) \otimes \Q \to \pi_*(P)\otimes \Q$ is an isomorphism.
 By the Whitehead--Serre theorem for simple spaces (or inspecting the induced map between Postnikov 
decompositions of $\widehat{P}$ and $P$) the induced map in rational cohomology is an isomorphism as well. 
\end{proof} 

Lemma \ref{LEM: cohomologies iso} and K\"unneth's formula imply
$$H^*(P;\Q)\cong H^*(T;\Q)\otimes H^*(\widetilde{P}_1;\Q)\otimes\dots 
\otimes H^*(\widetilde{P}_m; \Q).$$
Note that  $H^*(P;\Q)$ is  generated by homogeneous elements in odd degrees if and only if 
the same holds for all 
 $H^*(\widetilde{P}_j;\Q),\; j\in\{1,\dots, m\}.$ 
Since the mapping degree
of $\theta$ on a $r$-dimensional flat torus is 
$2^r,$  we are left to verify Theorem \ref{THM: classification} only
for irreducible simply connected 
compact symmetric spaces $P=\LieG/\LieH$ whose rational cohomology algebra is generated by homogeneous 
elements in odd degrees.\par

Since the  rational cohomology of an \emph{inner} compact symmetric space, this is a compact symmetric 
space all of whose 
geodesic symmetries belong to its transvection group $\LieG,$ 
 has only  contributions in even degrees
  (see e.g.\ \cite[proof of Thm.\ 8.6.7]{W} and \cite[Thm.\ VII, p.\ 467]{GHV}), 
  we may assume that $P=\LieG/\LieH$  is an \emph{outer} symmetric space.
  Using the classification of simply connected irreducible compact symmetric spaces one could at this point determine all 
  outer symmetric spaces that satisfy the assumptions of 
  Theorem \ref{THM: classification} by 
checking  case-by-case the rational cohomology of such spaces given in \cite{Take} (see also \cite{MT, Spiv, GHV}). But we
prefer a  more conceptional approach that we essentially learned from \textsc{Oliver Goertsches.}\par
Since the Lie algebras of $\LieG$ and of $\LieH$ form a \emph{Cartan pair}  $(\lieG,\lieH)$
 (see \cite[pp.\ 448 \& 465]{GHV}) with $\rho:=\mathrm{rank}(\lieG)-\mathrm{rank}(\lieH)>0,$ 
 $H^*(\LieG/\LieH;\Q)$ is isomorphic to a tensor product of a  $2^{\rho}$ dimensional exterior algebra
  and a quotient of a symmetric algebra (see \cite[Thm.\ IV, p.\ 463]{GHV} and \cite[Thm.\ 3]{Kotschick}). 
Therefore
 $\LieG/\LieH$ satisfies the hypotheses of Theorem \ref{THM: classification} if and only if 
 $\dim(H^*(\LieG/\LieH;\Q))
 =2^{\rho}.$ 
 \begin{rem}
 In \cite{Goertsches}  \textsc{Goertsches} gave a Lie theoretic description of these spaces. 
 They are precisely those symmetric spaces where the number of Weyl chambers of $\lieG$ 
that intersect a given Weyl chamber of $\lieH$ is equal to one. 
Non-trivial intersections of Weyl chambers of $\lieG$ with Weyl chambers 
of $\lieH$ are called \emph{compartments} in \cite{EMQ}. \par
From \cite[Sect.\ 3]{Murak}  
one sees that these spaces $P$ are those where 
 the involution of $\lieG$ associated with $P$ 
is the canonical extension 
of an order two automorphism of the Dynkin diagram of $\lieG$ 
(see also \cite[pp.\ 33ff.]{Bur-Raw} and 
\cite[pp.\ 1128 \& 1129]{EMQ}).
 \end{rem}

The simply connected irreducible compact outer  symmetric spaces $P=\LieG/\LieH$ of this kind are precisely 
(see \cite[Table p.\ 305]{Murak}):
\begin{itemize}
 \item those of splitting rank. These are symmetric spaces where the rank of $\LieG$ is 
 the sum of the rank of $\LieH$ and the rank of $P. $ The irreducible simply connected compact symmetric 
 spaces of splitting rank are the
simply connected compact simple Lie groups, the odd dimensional round spheres,
$\LieSU_{2n}/\LieSp_n$ with $n\geq 3,$ and the exceptional space 
$\LieE_6/\LieF_4.$
 \item  and the spaces $\LieSU_{2n+1}/\LieSO_{2n+1}$ with $n\in\N.$ 
 \end{itemize}
The referee made us aware 
that the mapping degree of $\theta$  has already been
calculated  by \textsc{Araki} in these cases:
\begin{itemize}
 \item $\deg(\theta)=2^{\mathrm{rank}(P)},$ if $P$ is of splitting rank, see \cite[Thm.\ 3.1]{Ar2}, and
 \item  $\deg(\theta)=2^n$ for $P=\LieSU_{2n+1}/\LieSO_{2n+1},$ see \cite[Thm.\ 4.9]{Ar2}.
\end{itemize}
This concludes the proof of Theorem \ref{THM: classification}.

\begin{rem} 
 In \cite{Ar2} \textsc{Araki} actually considered the map
 $\LieG/\LieH\to\LieG/\LieH,\; g\LieH\mapsto g\sigma(g^{-1})\LieH,$ where
 $\sigma$ is the involution of $\LieG$ such that $\LieH$ is the identity component of 
its fixed point set. In our terms we have
 $\sigma(g)=s_{e\LieH}\circ g\circ s_{e\LieH}.$
Using $s_{e\LieH}(g\LieH)=\sigma(g)\LieH$ and $s_{g\LieH}=g\circ s_{e\LieH}\circ g^{-1}$
one sees that Araki's map coincides with our map $\theta$ if one chooses $eH$ as base point.
\end{rem}

\begin{rem}(Lagrangian Grassmanians of odd rank)
\label{REM: Lagrangian Grassmannian}
The Lagrangian Grassmannian $\mathcal{L}:=\LieU_{2n+1}/\LieO_{2n+1}$ can be identified with the set of all 
Lagrangian subspaces of 
$\C^{2n+1}.$  Since $\mathcal{L}=(\LieU_{2n+1}/\{\pm I\})/\LieSO_{2n+1},$
it meets the assumptions of Theorem \ref{THM: classification}. 
 The reflection at a Lagrangian subspace is an orthogonal 
 anti-symplectic involution of $\C^{2n+1}$ and 
vice-versa. Identifying $\mathcal{L}$ with the space of all orthogonal anti-symplectic involutions 
of $\C^{2n+1}$ it is  
shown in \cite{AFS} that the conjugation 
$$\mathcal{L}\times \mathcal{L}\to \mathcal{L},\quad (R_1,R_2)\mapsto R_1R_2R_1$$
 is
 a $\Gamma$-structure on $\mathcal{L}.$
 If one identifies $\mathcal{L}$ with the fixed point set $\mathrm{Fix}(\tau)$ of 
 the transposition map $\tau$ of $\LieU_{2n+1}$ by mapping the Lagrangian subspace 
 $A(\R^{2n+1}),\; A\in\LieU_{2n+1},$ to the matrix $AA^\top\in \mathrm{Fix}(\tau),$ 
 the $\Gamma$-structure defined above 
can be written as
 $$\mathrm{Fix}(\tau)\times \mathrm{Fix}(\tau)\to \mathrm{Fix}(\tau), \quad (A,B)\mapsto AB^{-1}A$$
 (see  \cite[p.\ 930]{AFS}).
 Since $\mathrm{Fix}(\tau)$ is a totally geodesic submanifold of $\LieU_{2n+1},$
 the $\Gamma$-structure considered in  \cite{AFS} is just
the  canonical product on $\mathcal{L}$ induced by the geodesic symmetries of the Lie group 
 $\LieU_{2n+1}.$
Since $\mathcal{L}$ is finitely covered by $S^1\times (\LieSU_{2n+1}/\LieSO_{2n+1})$ 
the  result in \cite{AFS} can be recovered from Lemmata \ref{LEM: Riemannian products} and \ref{LEM: Coverings}
and \cite[Thm.\ 4.9]{Ar2}.
\end{rem}

\begin{rem}[Compact Lie groups]
  A compact Lie group with a bi-invariant metric is a symmetric space of splitting rank. 
  The canonical product on a compact Lie group,
  considered as a symmetric space, discussed here 
  is a $\Gamma$-structure  different from the Lie theoretic product. If one chooses the identity as 
  base point, then
  $\theta$ is actually the squaring map.
\end{rem}


\end{document}